\newcommand{\monic}{>->}	% hookrightarrow would also be an option
\newcommand{\epic}{->>}
\definecolor{myurlcolor}{rgb}{0,0,0.4}
\definecolor{mycitecolor}{rgb}{0,0.5,0}
\definecolor{myrefcolor}{rgb}{0.5,0,0}
\newcommand{\beq}{\begin{equation}}
\newcommand{\eeq}{\end{equation}}
\newcommand{\Z}{\mathbb{Z}}
\newcommand{\R}{\mathbb{R}}
\newcommand{\op}{\mathrm{op}}
\newcommand{\eps}{\varepsilon}
\newcommand{\newterm}[1]{\textbf{#1}}
\newcommand{\conv}[1]{\mathrm{conv}(#1)}
\newcommand{\cat}{\mathsf{C}}
\newcommand{\Norm}{\mathsf{Norm}}
\newcommand{\ideal}{\mathcal{N}}	% ideal of small/negligible morphisms
\newcommand{\iker}[1]{\mathrm{ker}(#1)}		% kernel wrt ideal
\newcommand{\icoker}[1]{\mathrm{coker}(#1)}	% cokernel wrt ideal
\newcommand{\iim}[1]{\mathrm{im}(#1)}		% normal image wrt ideal
\newcommand{\icoim}[1]{\mathrm{coim}(#1)}	% normal coimage wrt ideal
\newcommand{\iKer}[1]{\mathrm{Ker}(#1)}		% kernel wrt ideal object
\newcommand{\iCoker}[1]{\mathrm{Coker}(#1)}	% cokernel wrt ideal object
\newcommand{\iIm}[1]{\mathrm{Im}(#1)}		% normal image wrt ideal object
\newcommand{\iCoim}[1]{\mathrm{Coim}(#1)}	% normal coimage wrt ideal object
\newcommand{\arrow}[1]{\mathsf{Arr}(#1)}
\newcommand{\id}{\mathrm{id}}
\newcommand{\Nsb}[1]{\mathsf{nSub}(#1)}		% normal subobjects
\newcommand{\Nqt}[1]{\mathsf{nQot}(#1)}		% normal subobjects
\newcommand{\Pos}{\mathsf{Poset}}
\newcommand{\dom}[1]{\mathrm{dom}(#1)}		% domain
\newtheorem{dummy}{Dummy}[section]
\newtheorem{thm}[dummy]{Theorem}
\newtheorem{ass}[dummy]{Assumption}
\newtheorem{lem}[dummy]{Lemma}
\newtheorem{prop}[dummy]{Proposition}
\newtheorem{cor}[dummy]{Corollary}
\newtheorem{qstn}[dummy]{Question}
\newtheorem{defn}[dummy]{Definition}
\newtheorem*{rep@theorem}{\rep@title}
\newcommand{\newreptheorem}[2]{%
\newenvironment{rep#1}[1]{%
 \def\rep@title{#2 \ref{##1}}%
 \begin{rep@theorem}}%
 {\end{rep@theorem}}}
\theoremstyle{remark}
\newtheorem{ex}[dummy]{Example}
\newtheorem{rem}[dummy]{Remark}
\numberwithin{equation}{section}
\begin{document}
\sloppy

% vertical spacing in multiline equations
\setlength{\jot}{6pt}

%-------------------------------------------------------------------

%%%%%%%%%%%% title page stuff %%%%%%%%%%%%%%%%%%%%%%%%%%

\title{Non-abelian and $\eps$-curved homological algebra with arrow categories}

\author{Tobias Fritz}

\address{Department of Mathematics, University of Innsbruck, Austria}
\email{tobias.fritz@uibk.ac.at}

\keywords{}

\subjclass[2010]{Primary: 18G50; Secondary: 46M18.}

\thanks{\textit{Acknowledgements.} We thank William Slofstra for many interesting discussions and Marco Grandis as well as an anonymous referee for helpful feedback on the paper.}

\begin{abstract}
	Grandis's non-abelian homological algebra generalizes standard homological algebra in abelian categories to \emph{homological categories}, which are a broader class of categories including for example the category of lattices and Galois connections.
	Here, we prove that if $\cat$ is any category with an ideal of null morphisms with respect to which (co)kernels exist, then the arrow category of $\cat$ is a homological category.
	This broadens the applicability	of Grandis's framework substantially.
	In particular, one can form the homology of chain complexes in $\cat$ by taking the homology objects to be morphisms of $\cat$, which one may think of as maps from an object of cycles to an object of chains modulo boundaries.

	One situation to which Grandis's original framework does not apply directly is \emph{$\eps$-curved homological algebra}.
	This refers to chain complexes of normed spaces whose differential squares to zero only approximately, in the sense that $\|d^2\| \leq \eps$ for some $\eps > 0$. This is relevant for example in the theory of approximate representations of groups, where Kazhdan has successfully employed $\eps$-curved homological techniques in an ad-hoc manner.
	We develop some basics of $\eps$-curved homological algebra and note that our result on arrow categories facilitates the application of Grandis's theory.
\end{abstract}

\maketitle

\tableofcontents

\section{Introduction}

The field of non-abelian homological algebra attempts to generalize the methods of homological algebra to categories that are not necessarily abelian~\cite{bb,grandis}.
One of the motivating examples is the long exact sequence of homotopy groups, which generically involves nonabelian groups and is therefore not covered by homological algebra in an abelian category.

A powerful categorical framework for non-abelian homological algebra has been developed in Marco Grandis's book \emph{Homological Algebra in Strongly Non-Abelian Settings}~\cite{grandis}, based on work going back to 1992~\cite{grandis_original}.
Many of the stronger results of this theory apply to \newterm{homological categories} in Grandis's sense, which are categories equipped with an ideal of morphisms called \newterm{null morphisms} and satisfying a bunch of axioms.
Unfortunately these axioms are still somewhat restrictive: for example, the category of groups is not homological.
A known workaround, which has been used extensively by Grandis, is to work with \emph{categories of pairs} instead, such as the category of pairs of groups.

\medskip

In this paper, we first redevelop a few basic results on kernels and cokernels in categories with null morphisms and provide a short recap on homological categories (\Cref{background}).
We then prove our first main result:

\begin{repthm}{Carrow}
	If $\cat$ is any category with null morphisms in which kernels and cokernels exist, then its arrow category acquires the structure of a homological category.
\end{repthm}

Since arrow categories are closely related to categories of pairs (the latter being a full subcategory of the former), one may think of this result as providing a general justification for the idea of moving to categories of pairs, but extended to the whole arrow category.

We then develop some further basic aspects of how to do homological algebra with such arrow categories (rest of \Cref{sec_arrows}).
In particular, given a chain complex in $\cat$, there is a well-behaved notion of its homology as a sequence of objects in $\arrow{\cat}$, and there is a long exact sequence in $\arrow{\cat}$ associated to every short exact sequence of chain complexes in $\cat$. 
This broadens the applicability of Grandis's framework.
For example the category of groups is now covered, since its arrow category is a well-behaved environment for homological algebra by virtue of being a homological category.

\medskip

In \Cref{seceps}, we turn to \newterm{$\eps$-curved homological algebra}. By this we mean homological algebra with normed spaces for which the differential $d$ squares to zero only approximately\footnote{The phenomenon that a differential does not need to square to zero is also known as \emph{curvature}, since it corresponds to curvature in differential geometry; see e.g.~\cite[(A.43)]{gasperini}.}, meaning that $\|d^2\| \le \eps$ in operator norm for a fixed parameter $\eps \in (0,1)$.
We show how this fits into our framework by starting with the category of normed spaces and declaring a morphism to be null if it is of operator norm $\le \eps$.
We construct kernels and cokernels in this category (\Cref{Normideal}) and a number of related technical results for it.

Our main motivation for this work comes from Kazhdan's results on approximate representations of groups~\cite{kazhdan}. Among other things, what he proved is that every approximate unitary representation of an amenable group can be perturbed to an actual representation. His method of proof has a clear cohomological flavour, despite not being cohomological in the strict sense: using coefficients in an approximate representation of a group does not make the group cohomology differential square to zero, but merely makes it square have small norm.
We briefly indicate towards the end how our results match Kazhdan's definition of $\eps$-acyclicity (\Cref{kazhdan}).

\medskip

We hope that the results of this paper point in a fruitful direction towards a framework for homological algebra in which Kazhdan's results find their conceptual underpinnings and can be taken further.
Another potential application may be to Shalom's characterization of property (T) in terms of vanishing first reduced cohomology~\cite{shalom}, for which one may hope for a quantitative version constructed in terms of our framework.\footnote{We thank Andreas Thom for suggesting this idea.}
Other approaches for developing such theories are conceivable, such as equipping curved differential algebras and curved $A_\infty$-algebras~\cite{brzezinski,CT} with suitably compatible norms.
But these are not the topic of this paper.

\subsection*{Notation}

In our diagrams, we use bullets ``$\bullet$'' to denote unnamed objects. We denote normal monomorphisms by $\!\!\!\!\begin{tikzcd} \ar[\monic]{r} & {} \end{tikzcd}\!\!\!\!$ and normal epimorphisms by $\!\!\!\!\begin{tikzcd} \ar[\epic]{r} & {} \end{tikzcd}\!\!\!\!$. Dotted arrows $\!\!\!\!\begin{tikzcd} \ar[densely dotted]{r} & {} \end{tikzcd}\!\!\!\!$ denote null morphisms.

\section{Categories with null morphisms and homological categories}
\label{background}

Here, we develop some basic theory of kernels and cokernels for categories with null morphisms.
Although this does not go beyond the results obtained by Grandis, this serves the purpose of developing some of this theory with minimal assumptions and in addition makes the paper accessible without any knowledge of Grandis's book~\cite{grandis}.

Throughout, we assume that $\cat$ is any category and $\ideal \subseteq \cat$ is any ideal of morphisms called \newterm{null morphisms}. This gives us notions of \emph{kernel} and \emph{cokernel}, both with respect to $\ideal$:

\begin{defn}
	For $f : A \to B$ be a morphism in $\cat$, a \newterm{kernel} is a morphism $\iker{f} : \iKer{f} \to A$ such that $f\circ\iker{A}$ is null, and such that every $g$ which makes $fg$ null uniquely factors through the kernel,
			\[
		\begin{tikzcd}
			& \bullet \ar["g"]{dr} \ar[densely dotted,bend left]{rrrd} \ar[dashed,"\exists!",swap]{dl} \\
			\iKer{f} \ar[\monic,"\iker{f}"]{rr} \ar[densely dotted,bend right]{rrrr} & & A \ar["f"]{rr} & & B
		\end{tikzcd}
	\]
\end{defn}

Here, the dotted arrows denote the null composites. An immediate consequence is that the kernel is characterized up to unique isomorphism in the slice category over $A$.
A \newterm{cokernel}
\[
	\icoker{f} : B \to \iCoker{f}
\]
is defined dually, and it is similarly unique in the coslice under $B$.
Since naming the kernel and cokernel objects is usually not necessary, we often merely write ``$\bullet$'' in place of the object names $\iKer{f}$ and $\iCoker{f}$, labelling only the morphisms.

\begin{ass}
	Throughout this section, we work in a category $\cat$ with an ideal of null morphisms $\ideal$ such that $\cat$ has kernels and cokernels (with respect to $\ideal$).
\end{ass}

\begin{rem}
	Grandis's results in~\cite{grandis} are formulated under the additional assumption that $\ideal$ is ``closed'', by which he means that every null morphisms factors through a null identity morphisms.
	However, the proofs of his basic results do not make use of this property, and it is also an assumption that we cannot make, since it is not satisfied in the category presented in \Cref{seceps}. This is why we redevelop the basic theory here with our own proofs, ensuring that closedness of the ideal is not needed.
\end{rem}

It is easy to see that every kernel is a monomorphism and every cokernel an epimorphism. The \newterm{image}
\[
	\iim{f} : \iIm{f} \to B
\]
of a morphism $f : A \to B$ is the kernel of its cokernel, and its \newterm{coimage}
\[
	\icoim{f} : A \to \iCoim{f}
\]
is the cokernel of its kernel.\footnote{In both cases, we drop the adjective ``normal'' used in~\cite{grandis}.} The cokernel of the image of a morphism is its mere cokernel, and similarly the kernel of the coimage is the kernel.

\begin{ex}
	\label{groups}
	In the category of groups with null morphisms being the trivial homomorphisms, kernels and cokernels exist. For a homomorphism $f : G \to H$, we have that $\icoim{f}$ is the usual image, while $\iim{f}$ is the normal subgroup generated by the usual image.
\end{ex}

A \newterm{normal monomorphism} is any morphism that is a kernel, and likewise a \newterm{normal epimorphism} is any cokernel. Given an object $A$, the isomorphism classes\footnote{We abuse notation and terminology by leaving the distinction between the equivalence class and a representative mostly implicit.} of normal monomorphisms into $A$ form a partially ordered collection of \newterm{normal subobjects} $\Nsb{A}$, which may or may not be a (small) set.
Similarly, the isomorphism classes of normal epimorphisms out of $A$ form a collection of \newterm{normal quotient objects} $\Nqt{A}$. 
It is straightforward to see that every normal monomorphism is the kernel of its cokernel, and similarly every normal epimorphism is the cokernel of its kernel.
Therefore taking kernels and cokernels implements a bijection
\beq
	\label{nqt_nsb}
	\Nqt{A} \cong \Nsb{A}.
\eeq
We consider $\Nqt{A}$ as partially ordered in such a way that this bijection is order-preserving, or equivalently such that $s \le t$ for $s, t \in \Nqt{A}$ if and only if $s$ factors across $t$.\footnote{Note that Grandis in~\cite{grandis} uses the opposite convention. Ours has the advantage that $s \le t$ for normal quotient objects is equivalent to $\iker{s} \le \iker{t}$ for normal subobjects, making the identification between normal subobjects and normal quotients more intuitive. On the flip side, taking the categorical dual of a statement now also requires reversing $\le$ in addition to the direction of the morphisms.}
Finally, a \newterm{short exact sequence} is a diagram
\[
	\begin{tikzcd}
		\bullet \ar["s",\monic]{r} & A \ar["t",\epic]{r} & \bullet
	\end{tikzcd}
\]
such that $s \in \Nsb{A}$ and $t \in \Nqt{A}$ correspond under the bijection, meaning that $s = \iker{t}$ and $t = \icoker{s}$.

\begin{ex}
	In the category of groups as in \Cref{groups}, the normal monomorphisms are the injective homomorphisms with normal image, and the normal epimorphisms are the surjective homomorphisms.
	In particular, the composition of two normal monomorphisms does not need to be normal.
	$\Nsb{G}$ for a group $G$ is the lattice of normal subgroups.
	Short exact sequences are the usual ones.
\end{ex}

We will see in \Cref{lattice} that $\Nsb{A}$ is a bounded lattice. The existence of a least element $\iker{\id}\in\Nsb{A}$ and a greatest element $\id = \iim{\id}\in\Nsb{A}$ are already obvious.

\begin{lem}
	Let $f$ and $g$ be composable.
	\begin{enumerate}
		\item\label{23mono} If $g$ is a monomorphism and $gf$ a normal monomorphism, then so is $f$.
		\item\label{23epi} If $f$ is an epimorphism and $gf$ a normal epimorphism, then so is $g$. 
	\end{enumerate}
\end{lem}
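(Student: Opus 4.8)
The plan is to prove (i) and obtain (ii) by categorical duality, since the two statements are exact duals of one another (dualization here reverses arrows and swaps ``mono'' with ``epi'' and ``normal mono'' with ``normal epi''; the $\le$-reversal mentioned earlier plays no role since no order appears). So let $f : A \to B$ and $g : B \to C$ with $g$ a monomorphism and $gf$ a normal monomorphism, and let me show that $f$ is a normal monomorphism, i.e.\ that $f$ is isomorphic to the kernel of its own cokernel.

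First I would record that $f$ is at least a monomorphism: $gf$ is a normal mono, hence a mono, and any $f$ with $gf$ monic is itself monic. Writing $q := \icoker{f}$, the universal property of the kernel gives a unique comparison $u : A \to \iKer{q}$ with $\iker{q}\circ u = f$, and $u$ is monic because both $f$ and $\iker{q}$ are. It then suffices to produce a two-sided inverse for $u$, for which I will show that $\iker{q}$ factors through $f$.

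The key step introduces the cokernel $p := \icoker{gf}$. Since $gf$ is a normal mono, it equals the kernel of its cokernel, so $gf = \iker{p}$ and in particular $p\,g\,f = p\circ\iker{p}$ is null. Thus $pg : B \to C$ kills $f$, so by the universal property of $q = \icoker{f}$ it factors as $pg = wq$ for some $w$. Now I compute $p\circ(g\circ\iker{q}) = w\circ q\circ\iker{q}$, which is null because $q\circ\iker{q}$ is null. Hence $g\circ\iker{q}$ is killed by $p$, and since $gf = \iker{p}$, it factors through $gf$: there is $v$ with $gf\,v = g\circ\iker{q}$. Cancelling the monomorphism $g$ yields $f v = \iker{q}$, which is exactly the desired factorization of $\iker{q}$ through $f$.

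Finally I would combine the two factorizations $f = \iker{q}\circ u$ and $\iker{q} = f v$: substituting gives $f = f v u$ and $\iker{q} = \iker{q}\,u v$, and cancelling the monos $f$ and $\iker{q}$ shows $vu = \id$ and $uv = \id$. Thus $u$ is an isomorphism, so $f \cong \iker{q} = \iker{\icoker{f}}$ is a kernel, i.e.\ a normal monomorphism. The only real subtlety is the bookkeeping of the universal properties in the third step — the trick of bringing in $p = \icoker{gf}$ to transport the factorization of $\iker{q}$ across $g$ — whereas the concluding cancellation argument is routine. Note that only $g$ being a monomorphism (not necessarily normal) is used.
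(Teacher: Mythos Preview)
Your proof is correct and follows essentially the same approach as the paper: both reduce to (i) by duality, factor $\icoker{gf}\circ g$ through $\icoker{f}$, use that $gf=\iker{\icoker{gf}}$ to obtain the needed factorization, and cancel the monomorphism $g$. The only difference is cosmetic---you phrase the argument via an explicit comparison map $u$ and its inverse $v$, whereas the paper verifies the universal property of $\iker{\icoker{f}}$ for $f$ directly---and there is a harmless slip in writing ``$pg:B\to C$'' (its codomain is $\iCoker{gf}$).
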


\begin{proof}
	By duality it is enough to prove the first statement. By assumption, $gf$ is the kernel of its cokernel; we prove that the same holds for $f$, considering the diagram
	\[
		\begin{tikzcd}
			& & & \bullet \ar{ddrr} \\
			\bullet \ar["\exists !?",swap,dashed]{dr} \ar["p"]{rr} \ar[densely dotted,bend left]{rrru} & & \bullet \ar["g"]{rd} \ar["\icoker{f}",\epic]{ur} \\
			& \bullet \ar["f" description]{ur} \ar[swap,"gf",\monic]{rr} & & \bullet \ar[swap,\epic,"\icoker{gf}"]{rr} & & \bullet 
		\end{tikzcd}
	\]
	Here, the extra diagonal morphism on the right is induced by the universal property of $\icoker{f}$, and $p$ is any morphism such that $\icoker{f}\circ p$ is null. Since $f$ is guaranteed to be a monomorphism by a standard property of monomorphisms, it is enough to show that $p$ factors through $f$ as indicated by the dashed arrow. Since $\icoker{gf}\circ gp$ is null by the commutativity of the diagram, the assumption that $gf$ is the kernel of its cokernel implies that we get the desired dashed arrow.
	To see that its composition with $f$ recovers $p$, we use that $g$ is a monomorphism together with the construction of the dashed arrow as a factorization of $gp$ across $gf$.
\end{proof}

\begin{ex}
	To see that \ref{23mono} is false without the assumption that $g$ is a monomorphism, take $\cat$ to be the category of groups as in \Cref{groups}, let $f$ be an inclusion of a two-element subgroup $\Z_2 \subseteq S_3$, and take $g$ to be the quotient homomorphism $S_3 \to \Z_2$.
	Then $gf$ is a normal monomorphism, while $f$ is a non-normal monomorphism.
\end{ex}

We now consider how normal subobjects transfer along morphisms. 

\begin{defn}
	\label{pushpull}
	Let $f : A \to B$. Then every $s\in\Nsb{B}$ has a \newterm{pullback}
	\[
		f^*s \coloneqq \iker{\icoker{s}f}
	\]
	and every $t\in\Nqt{A}$ has a \newterm{pushforward}
	\[
		f_*t \coloneqq \icoker{f\,\iker{t}}.
	\]
\end{defn}

Through the bijection $\Nsb{A} \cong \Nqt{A}$, we can also apply the pullback operation to normal quotient objects and the pushforward operation to normal subobjects.

\begin{lem}
	\label{pullpush}
	In the situation of \Cref{pushpull}, the diagram
	\[
		\begin{tikzcd}
			\bullet \ar[\monic,"f^*s",swap]{d} \ar[\monic]{r} & \bullet \ar[\monic,"s"]{d} \\
			A \ar["f"]{r} & B
		\end{tikzcd}
	\]
	is a pullback in $\cat$, and
	\[
		\begin{tikzcd}
			A \ar["f"]{r} \ar["t",swap,\epic]{d} & B \ar[\epic,"f_*t"]{d} \\
			\bullet \ar[\epic]{r} & \bullet
		\end{tikzcd}
	\]
	is a pushout in $\cat$.
\end{lem}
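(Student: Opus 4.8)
The plan is to establish the pullback statement by a direct diagram chase with the defining universal properties of kernel and cokernel, and then to deduce the pushout statement by passing to the opposite category.

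First I would pin down the top arrow of the left-hand square, which the statement leaves implicit. Writing $s : S \to B$ and recalling that a normal monomorphism is the kernel of its cokernel, so that $s = \iker{\icoker{s}}$, and that $f^*s = \iker{\icoker{s}f} : P \to A$, the composite $\icoker{s}\, f\, f^*s$ is null by the defining property of $f^*s$. Since $s$ is the kernel of $\icoker{s}$, the morphism $f\, f^*s$ factors uniquely through $s$, and I would take this factorization $\bar f : P \to S$ as the top arrow; by construction $s\,\bar f = f\, f^*s$, so the square commutes.

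Next I would verify the universal property. Given $u : X \to A$ and $v : X \to S$ with $fu = sv$, composing with $\icoker{s}$ gives $\icoker{s}\, f\, u = \icoker{s}\, s\, v$; the right-hand side is null because $\icoker{s}\, s$ is null (again $s = \iker{\icoker{s}}$) and $\ideal$ is an ideal, whence $(\icoker{s}f)\, u$ is null. The universal property of the kernel $f^*s$ then produces a unique $w : X \to P$ with $f^*s\, w = u$, and $s\,\bar f\, w = f\, f^*s\, w = fu = sv$ together with $s$ monic forces $\bar f\, w = v$; uniqueness of $w$ is automatic since $f^*s$, being a kernel, is monic.

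Finally, for the pushout I would invoke duality: $\cat^{\op}$ carries the same ideal $\ideal$ and again has kernels and cokernels (namely the cokernels and kernels of $\cat$), and the pullback operation computed in $\cat^{\op}$ is precisely the pushforward $f_*t = \icoker{f\,\iker{t}}$ of $\cat$, so the second square is just the first read in $\cat^{\op}$. I do not expect a serious obstacle here, as the whole argument is one application of the kernel universal property; the only points requiring care are supplying the top arrow $\bar f$ rather than assuming it is given, and checking the nullity of $(\icoker{s}f)\, u$ using only that $\ideal$ is an ideal — notably without any closedness assumption on $\ideal$, in keeping with this section's conventions.
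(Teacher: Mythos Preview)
Your proposal is correct and matches the paper's own proof essentially step for step: the paper also reduces to the pullback case by duality, uses that $\icoker{s}\,f\,r$ is null (via $s\ell = fr$ and the ideal property) to factor through the kernel $f^*s$, and then gets the other triangle from $s$ being monic. The only difference is cosmetic---you spell out the construction of the top arrow $\bar f$ explicitly, whereas the paper leaves it implicit in the displayed square.
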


In particular, pullbacks of normal subobjects can be computed as ordinary pullbacks in $\cat$, and similarly pushforwards as pushouts.
This follows without any assumption on the existence of ordinary limits or colimits in $\cat$.

\begin{proof}
	By duality it is enough to prove the first statement.
	We do so using the diagram
	\[
		\begin{tikzcd}[sep=1.3cm]
			\bullet \ar[bend left,"\ell"]{drr} \ar[bend right,"r",swap]{ddr} \ar[dashed,"\exists!?"]{dr} \\
			& \bullet \ar[\monic,"\iker{\icoker{s}f}" description,swap]{d} \ar[\monic]{r} & \bullet \ar[\monic,"s"]{d} \\
			& A \ar["f"]{r} & B \ar[\epic,"\icoker{s}"]{d} \\
			& & \bullet
		\end{tikzcd}
	\]
	for given $\ell$ and $r$ that satisfy $s\ell = fr$.
	Since $f^*s = \iker{\icoker{t}f}$ is a monomorphism, it is enough to show that a dashed arrow exists which makes the diagram commute. Since $\icoker{s}s$ is null, we therefore know that $\icoker{s}fr$ is null as well. Hence $r$ factors through the specified kernel, giving the dashed arrow making the $r$-triangle commute. Now the $\ell$-triangle commutes automatically since $s$ is a monomorphism.
\end{proof}

Although normal monomorphisms do not need to be closed under composition\footnote{The non-transitivity of normal subgroup inclusion is the paradigmatic example.}, this does hold in certain situations like the following.

\begin{lem}
	\label{doublepull}
	Let $s,t \in\Nsb{A}$. Then $s \circ s^*t$ and $t \circ t^*s$ are normal monomorphisms representing the same normal subobject of $A$, namely $s_* s^* t = t_* t^* s$.
\end{lem}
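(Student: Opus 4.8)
The plan is to establish the three claims in sequence: that the two composites represent a common subobject, that this subobject is normal, and that it coincides with the two pushforwards. Throughout, write $c_s \coloneqq \icoker{s}$ and $c_t \coloneqq \icoker{t}$, so that $s = \iker{c_s}$ and $t = \iker{c_t}$ by normality, while $s^*t = \iker{c_t s}$ and $t^*s = \iker{c_s t}$ by \Cref{pushpull}. Denote by $S$ and $T$ the domains of $s$ and $t$.

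First I would realize both composites as diagonals of the same pullback. By \Cref{pullpush}, $s^*t$ sits in a pullback square over the cospan $S \xrightarrow{s} A \xleftarrow{t} T$ with right-hand leg $t$, and symmetrically $t^*s$ sits in a pullback square over the same cospan with the roles of $s$ and $t$ interchanged. Uniqueness of pullbacks then supplies an isomorphism between the two apexes commuting with all legs, hence with the two diagonals into $A$; this shows $s \circ s^*t$ and $t \circ t^*s$ represent the same subobject, once normality is known.

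The heart of the argument, and the step I expect to be the main obstacle, is to show that the monomorphism $m \coloneqq s \circ s^*t$ is normal. This does \emph{not} follow formally from $s$ and $s^*t$ being normal, since composites of normal monomorphisms need not be normal. I would prove $m = \iker{\icoker{m}}$ by hand. Setting $c \coloneqq \icoker{m}$, the key observation is that both $c_s$ and $c_t$ factor through $c$: indeed $c_s m = (c_s s)(s^*t)$ is null because $c_s s$ is null, and $c_t m = (c_t s)(s^*t)$ is null because $s^*t = \iker{c_t s}$; so the universal property of the cokernel $c$ yields $\gamma_s,\gamma_t$ with $c_s = \gamma_s c$ and $c_t = \gamma_t c$. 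Now for any $g$ with $cg$ null, both $c_s g = \gamma_s(cg)$ and $c_t g = \gamma_t(cg)$ are null because $\ideal$ is an ideal, so $g$ factors through $s = \iker{c_s}$ and through $t = \iker{c_t}$, say $g = s\,g_S = t\,g_T$. The pullback square
\[
\begin{tikzcd}
\bullet \ar[\monic,"s^*t",swap]{d} \ar[\monic,"w"]{r} & T \ar[\monic,"t"]{d} \\
S \ar["s"]{r} & A
\end{tikzcd}
\]
then produces a unique $h$ with $s^*t \circ h = g_S$ and $w \circ h = g_T$, whence $m h = s\,g_S = g$; uniqueness of this factorization is automatic since $m$ is monic. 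Thus $m = \iker{c}$ is normal, and the same reasoning applies verbatim to $t \circ t^*s$.

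Finally I would match the subobject with the pushforward. Unwinding \Cref{pushpull} together with the bijection \eqref{nqt_nsb}, the normal subobject $s_* s^*t$ is by definition $\iker{\icoker{s \circ s^*t}} = \iim{s \circ s^*t}$; since $m = s \circ s^*t$ is now known to be normal, it equals its own image, giving $s \circ s^*t = s_* s^*t$, and symmetrically $t \circ t^*s = t_* t^*s$. The first step then identifies these two, yielding $s_* s^*t = t_* t^*s$. I anticipate that the only fiddly parts are the bookkeeping in matching the legs of the two pullback squares and in unwinding the pushforward through \eqref{nqt_nsb}; all the genuine content sits in the normality argument above.
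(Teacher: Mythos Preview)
Your proof is correct and follows essentially the same strategy as the paper's: both arguments realize the composite as the diagonal of the pullback square from \Cref{pullpush}, observe that $\icoker{s}$ and $\icoker{t}$ each factor through $\icoker{m}$, and then use the pullback property together with $s = \iker{c_s}$, $t = \iker{c_t}$ to verify that $m$ is the kernel of its cokernel. The paper is simply terser --- it packages your detailed verification into the phrase ``it is now straightforward to check that $d$ is also the kernel of its cokernel'' --- and it chooses representatives for $s^*t$ and $t^*s$ with a common domain from the outset rather than invoking pullback uniqueness separately, but the content is the same.
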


\begin{proof}
	Consider the diagram
	\begin{equation}
		\label{doublepulldiag}
		\begin{tikzcd}[sep=1.2cm]
			\bullet \ar["d" description]{rd} \ar[\monic,"t^*s"]{r} \ar[swap,\monic,"s^*t"]{d} & \bullet \ar[\monic,"t"]{d} \\
			\bullet \ar[\monic,swap,"s"]{r} & \bullet \ar[\epic,"\icoker{s}"]{r} \ar[\epic,swap,"\icoker{t}"]{d} \ar[\epic,"\icoker{d}" description]{dr} & \bullet \\
			& \bullet & \bullet \ar[dashed]{l} \ar[dashed]{u}
		\end{tikzcd}
	\end{equation}
	where the diagonal $d$ is defined by the commutativity, and where we have chosen representing morphisms for the normal subobjects $s^* t$ and $t^* s$ in such a way that they have the same domain and the upper square commutes; this is possible by \Cref{pushpull}.
	We show first that $d$ is a normal monomorphism.
	The dashed arrows are induced by the universal property of $\icoker{d}$. Using the fact that $s$ and $t$ are the kernels of their respective cokernels, as well as the fact that the upper square is a pullback, it is now straightforward to check that $d$ is also the kernel of its cokernel, and therefore a normal monomorphism.

	This also implies that $s_* s^* t$ is represented by $d$, as is $t_* t^* s$ by symmetry. Hence we obtain $s_* s^* t = t_* t^* s$.
\end{proof}

\begin{lem}[{\cite[1.5.8]{grandis}}]
	\label{galois_conn}
	For $f : A \to B$ and $s \in \Nsb{A}$ and $t \in \Nsb{B}$, 
	we have
	\[
		s \leq f^* t \textrm{ in } \Nsb{A} \qquad\Longleftrightarrow\qquad f_* s \leq t \textrm{ in } \Nsb{B}.
	\]
\end{lem}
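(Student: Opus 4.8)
The plan is to reduce both inequalities in the claimed equivalence to a single, manifestly symmetric condition on $f$, $s$, and $t$, namely that the composite $\icoker{t}\circ f\circ s$ is null. Once each side has been shown equivalent to this one statement, the equivalence follows at once, and the Galois-connection shape of the result becomes transparent. The only tools needed are the universal properties of the kernel and cokernel together with the identification $\Nqt{A}\cong\Nsb{A}$ from~\eqref{nqt_nsb}; no hypothesis beyond the mere existence of kernels and cokernels should enter.

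First I would unwind the left-hand side. By definition $f^*t = \iker{\icoker{t}\,f}$, so $f^*t$ is literally the kernel of $\icoker{t}\,f$. Since $s$ and $f^*t$ are both normal monomorphisms into $A$, the relation $s \le f^*t$ in $\Nsb{A}$ means exactly that $s$ factors through $f^*t$. By the universal property of the kernel $\iker{\icoker{t}\,f}$, the morphism $s$ factors through it if and only if $\icoker{t}\,f\,s$ is null, the converse direction being immediate from the defining null composite of the kernel. Hence $s \le f^*t$ holds if and only if $\icoker{t}\circ f\circ s$ is null.

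Second comes the right-hand side, which requires a little more bookkeeping with the bijection. Here I would first observe that, because $s$ is a normal monomorphism, it is the kernel of its own cokernel, so $\iker{\icoker{s}} = s$; substituting this into \Cref{pushpull} shows that $f_*s$, regarded as an element of $\Nqt{B}$, is simply $\icoker{f s}$. Using that the bijection $\Nqt{B}\cong\Nsb{B}$ is order-preserving, the relation $f_*s \le t$ in $\Nsb{B}$ translates into $\icoker{f s} \le \icoker{t}$ in $\Nqt{B}$, which by the chosen order on normal quotients says that $\icoker{t}$ factors through $\icoker{f s}$. By the universal property of the cokernel $\icoker{f s}$, this last condition holds precisely when $\icoker{t}\circ f\,s$ is null — the very condition obtained on the left. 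Combining the two paragraphs closes the argument.

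The genuinely delicate part is not any one universal-property computation but keeping the two order conventions straight: the order on $\Nsb{B}$ is by factorization of monomorphisms, whereas the induced order on $\Nqt{B}$ has the larger quotient factoring through the smaller, and I must invoke the correct one when reinterpreting $f_*s \le t$ on the quotient side. Getting this direction backwards would invert the whole equivalence. I note also that routing the right-hand side through $\Nqt{B}$ and the cokernel of $fs$ is what makes the proof clean: working instead inside $\Nsb{B}$ with the image $\iim{fs}$ would force me to compare $\icoker{t}\circ\iim{fs}$ with $\icoker{t}\circ fs$, and bridging these would require the comparison map $A\to\iIm{fs}$ to be epic, which is not available in general. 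Passing to the cokernel sidesteps this entirely.
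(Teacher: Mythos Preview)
Your proof is correct and follows essentially the same route as the paper: both arguments reduce each side of the biconditional to the single condition that $\icoker{t}\circ f\circ s$ is null, using the universal properties of kernel and cokernel respectively. The only cosmetic difference is that you invoke the order-preserving bijection $\Nsb{B}\cong\Nqt{B}$ to handle the right-hand side directly via $\icoker{fs}$, whereas the paper's diagram also records the equivalent step through $\iim{fs}=\iker{\icoker{fs}}$ on the subobject side.
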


Hence we have a Galois connection $f_* \dashv f^*$, and the category of elements of the functor $\Nsb{-} : \cat \to \Pos$ is a Grothendieck bifibration over $\cat$.

\begin{proof}
	We have a diagram
	\[
		\begin{tikzcd}[sep=1cm]
			\bullet \ar[\monic,"s"]{dr} \ar[densely dotted,bend left]{rr} \ar[dashed,bend right]{dd} & & \bullet & & \bullet \ar["\iker{\icoker{fs}}" description,\monic]{dl} \ar[densely dotted,bend right]{ll} \ar[dashed,bend left]{dd} \\
			& A \ar["f"]{rr} & & B \ar[\epic,"\icoker{fs}" description]{ul} \ar[\epic,"\icoker{t}" description]{dl} \\
			\bullet \ar[swap,\monic,"\iker{\icoker{t}f}" description]{ur} \ar[densely dotted,bend right]{rr} & & \bullet \ar[dashed,bend left,crossing over,<-]{uu} & & \bullet \ar[\monic,"t",swap]{ul} \ar[densely dotted,bend left]{ll}
		\end{tikzcd}
	\]
	where the dotted composites are null. Applying the universal properties shows that the dashed arrow on the left exists if and only if the one in the middle does if and only if the one on the right does, using also the fact that $t = \iker{\icoker{t}}$ and similarly for $fs$.
\end{proof}

\begin{lem}
	\label{fstar}
	For
	\[
		\begin{tikzcd}
			A \ar["f"]{r} & B \ar["g"]{r} & C
		\end{tikzcd}
	\]
	in $\cat$, we have
	\begin{align}
		\iker{gf} & = f^* \iker{g}, & \icoker{gf} & = g_* \icoker{f}, \\
		g_* \iim{f} & = \iim{gf}, & f^* \icoim{g} & = \icoim{gf}.
	\end{align}
\end{lem}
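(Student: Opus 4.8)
The plan is to prove the top-left identity $\iker{gf} = f^*\iker{g}$ directly from the universal properties, to obtain the top-right identity $\icoker{gf} = g_*\icoker{f}$ as its exact dual, and then to derive the two bottom identities from the top two by passing through the bijection $\Nqt{-} \cong \Nsb{-}$ from \eqref{nqt_nsb}.

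For the first identity I would unfold the definitions. Since $\iker{g}\in\Nsb{B}$ and $\icoker{\iker{g}} = \icoim{g}$, the pullback is $f^*\iker{g} = \iker{\icoim{g}\circ f}$, so the claim reduces to showing that $gf$ and $\icoim{g}\circ f$ have the same kernel as subobjects of $A$. Two morphisms out of $A$ represent the same kernel exactly when they are killed by the same test morphisms, since $\iker{p}$ represents the subfunctor of $\mathrm{Hom}(-,A)$ of those $h$ with $ph$ null (this is a subfunctor because null morphisms form an ideal). Thus it suffices to check, for every $h$ with codomain $A$, that $gfh$ is null iff $\icoim{g}\,fh$ is null. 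By the universal property of $\iker{g}$ the first holds iff $fh$ factors through $\iker{g}$, and by the universal property of $\iker{\icoim{g}}$ the second holds iff $fh$ factors through $\iker{\icoim{g}}$. Since the kernel of the coimage is the kernel, i.e. $\iker{\icoim{g}} = \iker{g}$, these two factorization conditions coincide, proving the identity. The top-right identity is the literal dual: kernels become cokernels, the pullback formula $f^*s = \iker{\icoker{s}f}$ dualizes to the pushforward formula $f_*t = \icoker{f\,\iker{t}}$, and the roles of $f$ and $g$ are exchanged, so it needs no separate argument.

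For the bottom identities I would reuse the top two through the bijection, recalling that a normal subobject $s$ corresponds to $\icoker{s}$ and a normal quotient $t$ to $\iker{t}$, together with the facts that the cokernel of the image is the cokernel and the kernel of the coimage is the kernel. For $g_*\iim{f} = \iim{gf}$: the normal subobject $\iim{f}$ corresponds to the normal quotient $\icoker{\iim{f}} = \icoker{f}$; pushing forward gives $g_*\icoker{f} = \icoker{gf}$ by the top-right identity; converting back yields $\iker{\icoker{gf}} = \iim{gf}$. Dually, $f^*\icoim{g} = \icoim{gf}$ follows from the top-left identity: $\icoim{g}$ corresponds to $\iker{g}$, pulling back gives $f^*\iker{g} = \iker{gf}$, and converting back yields $\icoker{\iker{gf}} = \icoim{gf}$.

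The only place needing care — and the main, though minor, obstacle — is the bookkeeping of the $\Nqt{-}\cong\Nsb{-}$ conversions in the last step: one must track on which side of the bijection each image or coimage lives, so that the pullback of $\icoim{g}$ is read as a pullback of normal subobjects and the pushforward of $\iim{f}$ as a pushforward of normal quotients. None of this is deep, but the identities are easy to garble if the conversions are not made explicit.
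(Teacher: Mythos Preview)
Your proof is correct. The argument for the first identity is essentially the paper's: both reduce $\iker{gf}=f^*\iker{g}$ to showing that $gfh$ is null iff $\icoim{g}\,fh$ is null for test morphisms $h$, using that $\iker{\icoim{g}}=\iker{g}$. Your phrasing via ``factors through $\iker{g}$ iff factors through $\iker{\icoim{g}}$'' is a touch more symmetric than the paper's, which only spells out the nontrivial implication, but the content is the same.

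For the second row your organization differs slightly from the paper's. The paper proves $g_*\iim{f}=\iim{gf}$ directly by showing $\icoker{gf}=\icoker{g\,\iim{f}}$ with an argument exactly dual to the kernel one, and then invokes duality for the coimage identity. You instead obtain the top-right identity by duality first, and then observe that under the bijection $\Nsb{-}\cong\Nqt{-}$ the bottom-left identity is literally the same statement as the top-right one (since $\iim{f}\leftrightarrow\icoker{f}$ and $\iim{gf}\leftrightarrow\icoker{gf}$). This is a legitimate and slightly more economical packaging: you only need one direct argument plus bookkeeping, whereas the paper writes out two. The underlying mathematics is identical, though, since the paper's direct proof of $\icoker{gf}=\icoker{g\,\iim{f}}$ \emph{is} the dual of the kernel argument and \emph{is} the definition-unfolded form of $g_*\icoker{f}=\icoker{gf}$.
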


\begin{proof}
	By duality, it is enough to prove the first statement in each row.
	For the statement on kernels, consider the diagram
	\[
		\begin{tikzcd}
			\bullet \ar["\iker{\icoim{g}f}",\monic]{drr} & & & \bullet \ar[\monic,"\iker{g}"]{dr} \ar[bend left,densely dotted]{drrr} \\
			& & A \ar["f"]{rr} & & B \ar[swap,"\icoim{g}",\epic]{dr} \ar["g"]{rr} & & C \\
			\bullet \ar[\monic,"\iker{gf}",swap]{urr} & & & & & \bullet \ar{ur}
		\end{tikzcd}
	\]
	where the dotted arrow is null.
	Our goal is to show that the two kernels on the left are isomorphic.
	To this end, it is enough to show that if $p$ is any other morphism with codomain $A$, then $gfp$ is null if and only if $\icoim{g}fp$ is null. Since the ``if'' direction is trivial, we focus on the ``only if''. So assume that $gfp$ is null. Then $fp$ factors through $\iker{g}$; but this implies that also $\icoim{g}fp$ is null, since already $\icoim{g}\iker{g}$ is.

	For the statement on images, consider the diagram
	\[
		\begin{tikzcd}
			& \bullet \ar["\iim{f}",\monic]{dr} & & & & X \\
			A \ar{ur} \ar["f"]{rr} & & B \ar[\epic,swap,"\icoker{f}"]{dr} \ar["g"]{rr} & & C \ar["q"]{ur} \\
			& & & \bullet \ar{ur}
		\end{tikzcd}
	\]
	In order to prove the claim, it is enough to show that $\icoker{gf}$ coincides with $\icoker{g\,\iim{f}}$. In other words, the composite of any $q$ out of $C$ with $gf$ is null if and only if the composite with $g\,\iim{f}$ is null. Again the ``if'' part is trivial, so we focus on the ``only if''. Assuming that $qgf$ is null, we can factor $qg$ through $\icoker{f}$; but then clearly also $qg\,\iim{f}$ is null, since $\icoker{f}\iim{f}$ is already null.
\end{proof}

For $f : A \to B$, we have a monotone map on normal subobjects given by
\[
	f^* f_* \: : \: \Nsb{A} \longrightarrow \Nsb{A}.
\]
By \Cref{galois_conn} and a standard properties of Galois connections, $f^* f_*$ is a closure operator on $\Nsb{A}$.

\begin{lem}
	\label{closed_vs_ker}
	For $f : A \to B$, the $f^* f_*$-closed normal subobjects of $A$ are precisely those of the form $\iker{gf}$ for some $g$ out of $B$. 
\end{lem}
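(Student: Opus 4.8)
The plan is to identify the $f^* f_*$-closed normal subobjects of $A$ with the image of the pullback map $f^*$, and then to recognize that image concretely as the subobjects of the form $\iker{gf}$. Since the preceding discussion already records that $f^* f_*$ is a closure operator coming from the Galois connection $f_* \dashv f^*$ of \Cref{galois_conn}, I would first invoke the standard fact that the fixed points of such a closure operator are exactly the elements in the image of the right adjoint $f^*$. Concretely, from the triangle identities $f^* f_* f^* = f^*$ and $f_* f^* f_* = f_*$ one sees that $f^* t$ is always closed, and conversely that any closed $s$ equals $f^*(f_* s)$; so the $f^* f_*$-closed normal subobjects are precisely those of the form $f^* t$ with $t \in \Nsb{B}$.

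It then remains to match up $\{\, f^* t : t \in \Nsb{B}\,\}$ with $\{\, \iker{gf} : g \textrm{ out of } B \,\}$, and I would do this by two containments. For the forward inclusion, the definition of the pullback in \Cref{pushpull} gives $f^* t = \iker{\icoker{t}\,f}$, which exhibits $f^* t$ as $\iker{gf}$ for the choice $g = \icoker{t}$. For the reverse inclusion, given any $g$ out of $B$, \Cref{fstar} yields $\iker{gf} = f^* \iker{g}$ with $\iker{g} \in \Nsb{B}$, so $\iker{gf}$ lies in the image of $f^*$. Combining the two inclusions shows that the image of $f^*$ coincides exactly with the collection of kernels $\iker{gf}$, and hence with the closed normal subobjects by the previous paragraph.

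I do not expect a serious obstacle here, since both nontrivial ingredients---the description of a pullback as $f^* t = \iker{\icoker{t}\,f}$ and the identity $\iker{gf} = f^* \iker{g}$---are already available from \Cref{pushpull} and \Cref{fstar}, and the remaining content is the purely order-theoretic statement about closure operators. The one point to keep straight is that ``closed'' here means a genuine fixed point $f^* f_* s = s$ rather than merely the inequality $s \le f^* f_* s$ that holds for all $s$, and that passing from the abstract ``image of $f^*$'' to the explicit kernels uses that every $t \in \Nsb{B}$ is itself a kernel, namely $t = \iker{\icoker{t}}$, so that $\icoker{t}$ really is a legitimate morphism $g$ out of $B$.
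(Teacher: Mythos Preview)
Your proof is correct and follows essentially the same approach as the paper. Both arguments use the Galois connection identity $f^* f_* f^* = f^*$, the definition $f^* t = \iker{\icoker{t}\,f}$ from \Cref{pushpull}, and the identity $\iker{gf} = f^* \iker{g}$ from \Cref{fstar}; the only cosmetic difference is that you explicitly factor through the intermediate characterization ``closed $=$ image of $f^*$'', whereas the paper handles the two directions directly.
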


\begin{proof}
	By definition of $f^*$, every $f^* f_*$-closed $k \in \Nsb{A}$ is of the form $\iker{gf}$, namely with $g = \icoker{f_*k}$. Conversely, we need to show that every $\iker{gf} \in \Nsb{A}$ is $f^* f_*$-closed. This follows from two applications of \Cref{fstar} and a standard property of Galois connections,
	\[
		f^* f_* \iker{gf} = f^* f_* f^* \iker{g} = f^* \iker{g} = \iker{fg},
	\]
	as was to be shown.
\end{proof}

\begin{lem}[{\cite[1.5.8]{grandis}}]
	\label{lattice}
	Every $\Nsb{A}$ is a bounded lattice.
\end{lem}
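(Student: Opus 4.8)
The plan is to show that every $\Nsb{A}$ admits all binary meets and joins; since the least element $\iker{\id}$ and greatest element $\id$ have already been identified, this gives the bounded lattice structure.

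For binary meets I would argue directly from \Cref{doublepull}. Given $s,t\in\Nsb{A}$, I claim that $s\wedge t$ is represented by the morphism $d$ appearing in diagram \eqref{doublepulldiag}. By \Cref{doublepull} this $d$ is a normal monomorphism, hence a genuine normal subobject of $A$; moreover $d = s\circ s^* t$ factors through $s$ and $d = t\circ t^* s$ factors through $t$, so $d\le s$ and $d\le t$. It then remains to verify the greatest-lower-bound property. The key input is that the upper-left square of \eqref{doublepulldiag} is a pullback in $\cat$, which is exactly the content of \Cref{pullpush}. Thus if $r\in\Nsb{A}$ satisfies $r\le s$ and $r\le t$, then $r$ factors as $r = s\circ r_s = t\circ r_t$ through $s$ and $t$ respectively, and the pullback property produces a morphism $u$ with $(s^*t)\circ u = r_s$ and $(t^*s)\circ u = r_t$, whence $d\circ u = s\circ (s^*t)\circ u = r$. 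Hence $r\le d$, so $d = s\wedge t$.

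For binary joins I would pass to the dual. Everything in this section applies verbatim to $\cat^{\op}$ equipped with the same class $\ideal$: the ideal axioms are self-dual, and kernels and cokernels in $\cat^{\op}$ are, respectively, cokernels and kernels in $\cat$, which exist by assumption. Applying the meet construction above inside $\cat^{\op}$ shows that the normal subobjects of $A$ in $\cat^{\op}$—which are precisely the normal quotient objects $\Nqt{A}$—have all binary meets with respect to the subobject order of $\cat^{\op}$. That order is the reverse of the order we have fixed on $\Nqt{A}$, so these $\cat^{\op}$-meets are exactly binary joins for our order on $\Nqt{A}$. Transporting along the order-preserving bijection \eqref{nqt_nsb} then yields binary joins in $\Nsb{A}$, completing the argument.

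The meet verification is routine once one invokes \Cref{pullpush}; the step requiring the most care is the join argument, where one must track the order reversal accurately so as to conclude that $\cat^{\op}$-meets of $\Nqt{A}$ become \emph{joins}, rather than meets, in $\Nsb{A}$. Concretely, one should confirm both that passing to $\cat^{\op}$ reverses the factorization order on $\Nqt{A}$ and that the bijection \eqref{nqt_nsb} is order-preserving under our chosen conventions, so that the two effects do not inadvertently cancel. With this bookkeeping in place, the existence of meets, joins, and the previously noted bounds exhibits $\Nsb{A}$ as a bounded lattice.
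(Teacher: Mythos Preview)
Your proof is correct and follows essentially the same route as the paper: meets are obtained from \Cref{doublepull} via the pullback property of the upper square in \eqref{doublepulldiag}, and joins are obtained by duality through $\cat^{\op}$ and the bijection \eqref{nqt_nsb}. You have simply spelled out in more detail the greatest-lower-bound verification and the order-reversal bookkeeping that the paper leaves implicit.
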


\begin{proof}
	Having noted the existence of least and greatest elements above, it remains to explain how to construct meets and joins in $\Nsb{A}$. So let $s,t\in\Nsb{A}$.
	Starting with meets, we claim that the meet of $s$ and $t$ is $s_* s^* t = t_* t^* s$ (\Cref{doublepull}). Indeed this claim follows from the upper square in \eqref{doublepulldiag} being a pullback.

	The construction of the join $s\lor t$ is dual, and follows from the bijection between normal subobjects and normal quotient objects, together with the existence of meets of normal subobjects in $(\cat^\op,\ideal^\op)$. Concretely, this means that the join can be constructed as
	\beq
		\label{join_formula}
		\icoker{t}^* \icoker{t}_* s \,=\, \icoker{s}^* \icoker{s}_* t,
	\eeq
	establishing the claim.
\end{proof}

Let us now turn to homology.

\begin{defn}
	\label{chaincomplex}
	\begin{enumerate}
		\item A \newterm{chain complex} $C = (C_\bullet,d_\bullet)$ is a sequence of objects $C_\bullet = (C_n)_{n \in \Z}$ together with a sequence of morphisms $d_\bullet = (d_n : C_n \to C_{n-1})_{n \in \Z}$ such that $d_{n-1} d_n \in \ideal$ for every $n\in\Z$.
		\item Such a chain complex is \newterm{exact} if $\iim{d_n} = \iker{d_{n-1}}$ in $\Nsb{C_{n-1}}$, or equivalently $\icoim{d_{n-1}} = \icoker{d_n}$ in $\Nqt{C_{n-1}}$, for all $n \in \Z$.
	\end{enumerate}
\end{defn}

Chain complexes form a category as usual, in which the morphisms $(C_\bullet,d_\bullet) \to (D_\bullet,d_\bullet)$ are the sequences of morphisms $f_\bullet = (f_n : C_n \to D_n)_{n\in\Z}$ commuting with the differentials,
\[
	d_n f_n = f_{n-1} d_n.
\]
It is straightforward to show that this makes chain complexes into a category.
Moreover, with null morphisms those $f_\bullet$ of which all components are null, the category of chain complexes also has kernels and cokernels which can be computed degreewise.
A short exact sequence of chain complexes is therefore a diagram
\[
	(C_\bullet,d_\bullet)\longrightarrow (D_\bullet,d_\bullet) \longrightarrow (E_\bullet,d_\bullet)
\]
consisting of two morphisms of chain complexes such that every component $C_n \to D_n \to E_n$ is short exact in $\cat$.

The homology objects of a chain complex should be certain subquotients of the objects in the complex formed out of the kernels and images of the differentials.
In order to allow for a well-behaved theory of homology with many of the usual properties known from homological algebra in abelian categories, Grandis has imposed extra conditions as follows.

\begin{defn}[{\cite[1.3.6]{grandis}}]
	\label{homologicalcatdef}
	The pair $(\cat,\ideal)$ is a \newterm{homological category} if the following hold:
	\begin{enumerate}
		\item\label{coker_exist} Kernels and cokernels exist.
		\item\label{null_exist} Every null morphism $f : A \to B$ factors through a null identity morphism.
		\item Normal monomorphisms and normal epimorphisms are closed under composition.
		\item\label{homologyaxiom} The \newterm{homology axiom}: given a normal monomorphism $i$ and a normal epimorphism $q$ as in
			\[
				\begin{tikzcd}
					A \ar[\monic,"i"]{r} \ar[\epic,dashed]{d} & B \ar[\epic,"q"]{d} \\
					H \ar[\monic,dashed]{r} & D
				\end{tikzcd}
			\]
			and such that $\icoker{i} \, \iker{q} \in \ideal$,
			there exists an object $H$ and a normal monomorphism and normal epimorphism, as indicated by dashed arrows, such that the square commutes.
	\end{enumerate}
\end{defn}

For example, the category of lattices as objects and Galois connections as morphisms is homological. For this and other examples, see~\cite[Sections~1.4~and~1.6]{grandis}.
Note also that this notion of homological category is to be distinguished from the one of Borceux and Bourn~\cite[Chapter~4]{bb}.
In contrast to the latter, a homological category is not determined by its underlying category alone, since various choices of null morphisms can result in distinct homological categories.
The freedom of choice of null morphisms will play an important role for us in \Cref{seceps}.

In order to have more convenient terminology for~\ref{null_exist}, let us say that a \newterm{null object} is an object whose identity morphism is null. 
The assumption is then that the null morphisms are exactly those morphisms that factors across a null object.
It follows that a morphism $f$ is null if and only if $\iIm{f}$, or equivalently $\iCoim{f}$, is a null object.

Based on~\ref{coker_exist} and~\ref{null_exist} only, Grandis showed in~\cite[1.5.5]{grandis} that every morphism $f : A \to B$ has a factorization
\beq
	\label{imcoim_factor}
	\begin{tikzcd}[column sep=35pt]
		A \ar[\epic,"\icoim{f}"]{r} & \iCoim{f} \ar{r} & \iIm{f} \ar[\monic,"\iim{f}"]{r} & B
	\end{tikzcd}
\eeq
called the \newterm{normal factorization}. The homology axiom \ref{homologyaxiom} can now also be stated as the requirement that, whenever $i$ and $q$ are as assumed, then the normal factorization of $qi$ is such that the middle morphism is an isomorphism. In particular, the homology object $H$ is both the image and the coimage of $qi$, and it be constructed in terms of pushforward and pullback as
\[
	\begin{tikzcd}
		A \ar[\monic,"i"]{r} \ar[\epic,"i^* q"']{d} & B \ar[\epic,"q"]{d} \\
		H \ar[\monic,"q_* i"']{r} & D
	\end{tikzcd}
\]
This shows in particular that the homology object is unique up to unique isomorphism.
And while the condition $\icoker{i} \, \iker{q} \in \ideal$ in~\ref{homologyaxiom} is nicely self-dual, an arguably more intuitive form is $\iker{q} \le i$, or equivalently $q \le \icoker{i}$, corresponding to the usual inclusion of boundaries in cycles if one thinks of $q$ as taking the quotient by boundaries.\footnote{Or yet equivalently $q \le \icoker{i}$.}

Given a chain complex $C = (C_\bullet, d_\bullet)$ in a homological category, we can therefore form the \newterm{homology object} at every $H_n(C)$ at $n \in \Z$ by applying \Cref{homologicalcatdef}\ref{homologyaxiom} with $q \coloneqq \icoker{d_{n-1}}$ and $i \coloneqq \iker{d_n}$.
Hence to every chain complex $C$ is associated a sequence of homology objects $H_\bullet(C)$.

One of the most basic and central themes of homological algebra is the development of long exact sequences for short exact sequences of chain complexes.
We will restate Grandis's main result on this in order to give a flavour of what his theory achieves, but this requires a bit more preparation.
Throughout the following, $(\cat,\ideal)$ is a homological category.

\begin{defn}[{\cite[2.3.1]{grandis}}]
	A morphism $f : A\to B$ is
	\begin{enumerate}
		\item \newterm{left modular} if for every $s\in\Nsb{A}$,
			\beq
				\label{left_modular}
				f^* f_* s = s \lor \iker{f}.
			\eeq
		\item \newterm{right modular} if for every $t\in\Nsb{B}$,
			\beq
				\label{right_modular}
				f_* f^* t = t \land \iim{f}.
			\eeq
	\end{enumerate}
\end{defn}

By the construction of meets and joins in the proof of \Cref{lattice}, every normal monomorphism is automatically right modular, and every normal epimorphism is automatically left modular.

\begin{defn}
	\label{modularitydef}
	$(\cat,\ideal)$ satisfies the \newterm{modularity condition} if the following hold:
	\begin{enumerate}
		\item For every object $A$, the lattice $\Nsb{A}$ is modular, meaning that
			\[
				s_1 \leq s_2 \quad \Longrightarrow \quad s_1 \lor (t \land s_2) = (s_1 \lor t) \land s_2
			\]
			for all $s_1,s_2,t\in\Nsb{A}$.
		\item Every morphism is left and right modular.
	\end{enumerate}
\end{defn}

\noindent
As explained in~\cite[1.2.8]{grandis}, the modularity condition implies Frobenius reciprocity in the form
\begin{align*}
	f^*(f_*s \lor t) & = s \lor f^* t, \\
	f_*(f^*t \land s) & = t \land f_*s
\end{align*}
for all morphisms $f : A \to B$ and $s \in \Nsb{A}$ and $t \in \Nsb{B}$.

\begin{thm}[{\cite[3.3.5]{grandis}}]
	\label{longexact}
	Let $(\cat,\ideal)$ be a homological category. Let
	\[
		(C_\bullet,d_\bullet)\longrightarrow (D_\bullet,d_\bullet) \longrightarrow (E_\bullet,d_\bullet)
	\]
	be a short exact sequence of chain complexes. Then there is an induced long sequence of homology objects
	\[
		\begin{tikzcd}[column sep=19pt]
			\ldots \ar{r} & H_{n+1}(E) \ar["\partial_{n+1}"]{r} & H_n(C) \ar{r} & H_n(D) \ar{r} & H_n(E) \ar["\partial_n"]{r} & H_{n-1}(C) \ar{r} & \ldots
		\end{tikzcd}
	\]
	such that:	
	\begin{enumerate}
		\item The unlabelled arrows are induced by functoriality of homology.
		\item The connecting morphisms $\partial_n$ are natural in the short exact sequence.
		\item Every composite of two morphisms in the sequence is null.
		\item The sequence is exact if the following instances of the modularity condition hold in $\Nsb{D_n}$ for every $n$:
			\begin{align*}
				\iim{d_{n+1}^D} \lor (C_n \land \iker{d_n^D}) = (\iim{d_{n+1}^D} \lor C_n) \land \iker{d_n^D}, \\[4pt]
				d_n^{D*} d_{n*}^D C_n = C_n \lor \iker{d_n^D}, \qquad d_{n*}^D d_n^{D*} C_{n-1} = C_{n-1} \land \iim{d_n^D}.
			\end{align*}
	\end{enumerate}
\end{thm}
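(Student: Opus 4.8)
The plan is to build the three families of morphisms, verify that consecutive composites are null, check naturality, and finally establish exactness under the modularity hypotheses. Throughout I would exploit that, because $C_n \hookrightarrow D_n \twoheadrightarrow E_n$ is short exact in each degree, the cycle subobjects $\iker{d_n}$ and boundary subobjects $\iim{d_{n+1}}$ of all three complexes can be compared inside the single lattice $\Nsb{D_n}$ by transporting them along the pushforward and pullback operations of \Cref{pushpull}. This turns the whole argument into a calculation with the modular-lattice operations $\land$, $\lor$, $d_{n*}^D$ and $d_n^{D*}$, so the first organizational step is to fix, once and for all, the translation of the cycles and boundaries of $C$, $D$, $E$ into $\Nsb{D_n}$.

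For the functorial maps $H_n(C)\to H_n(D)$ and $H_n(D)\to H_n(E)$ (property (i)), I would first record that a chain map carries cycles to cycles and boundaries to boundaries: commutation with the differentials gives $f_{n*}\iker{d_n^C} \le \iker{d_n^D}$ and $f_{n*}\iim{d_{n+1}^C}\le\iim{d_{n+1}^D}$, both of which follow from \Cref{fstar} and the Galois connection of \Cref{galois_conn}. A chain map therefore induces a morphism on the subquotients realized by the homology axiom \Cref{homologicalcatdef}\ref{homologyaxiom}; concretely this morphism is built from $f_{n*}$ together with the universal properties of the kernels and cokernels involved, and functoriality in $f_\bullet$ is immediate from the uniqueness of the induced morphism between subquotients.

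The connecting morphism $\partial_n : H_n(E)\to H_{n-1}(C)$ is the delicate construction. I would define it as the morphism on subquotients induced by $d_n^D : D_n\to D_{n-1}$, mirroring the classical ``lift, apply $d$, descend'': one pulls the cycles $\iker{d_n^E}$ of $E_n$ back along the epimorphism $D_n\twoheadrightarrow E_n$ to a normal subobject of $D_n$, applies $d_{n*}^D$, and observes that, by short-exactness of the rows together with $d_{n-1}^D d_n^D \in \ideal$, the result lands inside the copy $C_{n-1}\hookrightarrow D_{n-1}$ and in fact inside the cycles $\iker{d_{n-1}^C}$; passing to the quotient by boundaries then yields a morphism into $H_{n-1}(C)$. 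Making this precise is exactly a matter of verifying the containments of normal subobjects that license an induced morphism between subquotients, which again reduces to \Cref{fstar}, \Cref{galois_conn} and the complex condition of \Cref{chaincomplex}. Naturality of $\partial_n$ in the short exact sequence, property (ii), then follows because every building block, being a pullback or a pushforward, is itself natural.

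It remains to show that every consecutive composite is null (property (iii)) and that, under the stated identities, the sequence is exact (property (iv)). Nullity is the easier half: each composite reduces, through the subquotient calculus, to a containment of normal subobjects in some $\Nsb{D_n}$ expressing ``boundaries inside cycles'' or ``image of the incoming map inside the kernel of the outgoing one,'' and these follow from short-exactness of the rows and $d^2\in\ideal$. \emph{Exactness is the main obstacle.} At each of the three kinds of spots one must upgrade the relevant containment to an equality of normal subobjects, and this is precisely where the hypotheses of (iv) enter: modularity of $\Nsb{D_n}$ and left/right modularity of the differentials (equivalently Frobenius reciprocity) are used to commute the operations $\land$, $\lor$, $d_{n*}^D$ and $d_n^{D*}$ past one another, so that the image of the incoming map and the kernel of the outgoing map can be rewritten into a common form. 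The three displayed identities in (iv) are exactly the instances needed: the modular-law identity governs exactness at $H_n(D)$, while the two modularity identities for $d_n^D$ govern exactness at the $C$- and $E$-spots. I expect the bookkeeping of which subobject of $D_n$ represents which cycle or boundary to be the part most prone to error, which is why I would settle that translation before performing any lattice manipulation.
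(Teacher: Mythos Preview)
The paper does not give its own proof of this theorem: it is quoted verbatim from Grandis~\cite[3.3.5]{grandis} and used as a black box (the only place it is invoked is in the proof of \Cref{arrowlongexact}). So there is no in-paper argument to compare your proposal against.

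That said, your outline is the correct shape and matches Grandis's strategy: reduce everything to the transfer calculus on $\Nsb{D_n}$ via $i_{n*}$, $q_n^*$, $d_{n*}^D$, $d_n^{D*}$, construct the connecting morphism as an induced map on subquotients (the ``lift--apply $d$--descend'' picture), and then verify that the three displayed identities in (iv) are precisely what is consumed at the three types of spots. The one place where your sketch is thin is the actual construction of $\partial_n$ as a \emph{morphism} (not just a correspondence of subobjects): in Grandis's setting this goes through the machinery of induced morphisms between subquotients, which requires checking compatibility conditions of the form ``numerator maps into numerator, denominator into denominator,'' and the homology axiom is what guarantees that the resulting subquotient is well-defined up to canonical isomorphism. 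You gesture at this, but the details---especially showing that the induced map is independent of choices and that naturality (ii) follows---are where most of the work in Grandis's proof actually lives.
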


\section{Arrow categories as homological categories}
\label{sec_arrows}

In Grandis's setup, conditions~\ref{null_exist}--\ref{homologyaxiom} in \Cref{homologicalcatdef} are crucial requirements.
In particular, the homology axiom is relevant for the construction of homology objects by quotienting cycles by boundaries: it guarantees that it does not matter whether one first does the restriction to cycles (by taking a kernel) or the quotient by boundaries (by taking a cokernel).
In our intended application to $\eps$-homological algebra this crucial property does not hold, and this raises the question of how to define homology objects.

The goal of this section is to explain how to get around this problem.
This substantially broadens the scope of homological algebra in general.
For example, the category of groups (\Cref{groups}) is not homological due to the failure of transitivity of normal subgroup inclusions, but the results of this section nevertheless provide a working definition of long exact sequence for any short exact sequence of chain complexes of (non-abelian) groups.

\begin{thm}
	\label{Carrow}
	Let $\cat$ be a category with an ideal of null morphisms $\ideal$ with respect to which kernels and cokernels exist. Then the arrow category $\arrow{\cat}$ is a homological category in the sense of \Cref{homologicalcatdef}, with the null morphisms given by those commuting squares
	\begin{equation}
		\label{square}
		\begin{tikzcd}
			A_0 \ar["f_0"]{r} \ar["a"]{d} & B_0 \ar["b"]{d} \\
			A_1 \ar["f_1"]{r} & B_1
		\end{tikzcd}
	\end{equation}
	whose diagonal $b f_0 = f_1 a$ is a null morphism.

	Furthermore, if $\cat$ satisfies the modularity conditions, then so does $\arrow{\cat}$.
\end{thm}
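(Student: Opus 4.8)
The plan is to reduce everything to the lattice $\Nsb{A_1}$ and the single morphism $f_1$ in $\cat$, exploiting that the diagonal null structure makes normal subobjects in $\arrow{\cat}$ degenerate on the codomain side. First I would pin down $\Nsb{a}$ for an object $a : A_0 \to A_1$. From the kernel formula in $\arrow{\cat}$ established above, namely $\iker{(f_0,f_1)} = (\iker{b f_0}, \id_{A_1})$ for $(f_0,f_1) : a \to b$, every normal monomorphism into $a$ is up to isomorphism of the form $(a^* s, \id_{A_1})$ with $s \in \Nsb{A_1}$, using $\iker{bf_0} = \iker{f_1 a} = a^* \iker{f_1}$ (\Cref{fstar}). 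Hence $\Nsb{a}$ is order-isomorphic to the image of $a^* : \Nsb{A_1} \to \Nsb{A_0}$, equivalently, via the Galois connection $a_* \dashv a^*$ of \Cref{galois_conn}, to the $a_* a^*$-fixed elements of $\Nsb{A_1}$. This is where right modularity of $a$ enters: $a_* a^* u = u \land \iim{a}$, so the fixed elements are exactly the principal ideal $\{u \in \Nsb{A_1} : u \le \iim{a}\}$, and $\Nsb{a} \cong [\bot, \iim{a}]$ as posets via $(a^* u, \id_{A_1}) \mapsto u$.

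Given this identification, the first condition of \Cref{modularitydef} is immediate: $[\bot, \iim{a}]$ is a sublattice of $\Nsb{A_1}$ (a principal ideal is closed under the ambient meet and join), hence modular because $\Nsb{A_1}$ is; since $\Nsb{a}$ is a bounded lattice by \Cref{lattice} and is poset-isomorphic to it, $\Nsb{a}$ is modular as well.

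For the second condition I would first transport the pushforward and pullback of a morphism $F = (f_0,f_1) : a \to b$ to the intervals $[\bot, \iim{a}]$ and $[\bot, \iim{b}]$. A direct computation with the kernel and cokernel formulas, \Cref{fstar}, and right modularity of $a$ and $b$ should give $F^* t = (f_1)^* t \land \iim{a}$ and $F_* u = (f_1)_* u$. For instance, $\icoker{T}$ for $T \leftrightarrow t$ has $B_1$-component $\icoker{t}$; precomposing with $F$ and taking the kernel on the $A_1$-side yields $\iker{\icoker{t}\, f_1\, a} = a^* (f_1)^* t$, which corresponds to $(f_1)^* t \land \iim{a}$, and the pushforward is dual. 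Thus the lattice operations of $\arrow{\cat}$ are governed entirely by those along $f_1$ in $\cat$. I would also record $\iker{F} \leftrightarrow \iker{f_1} \land \iim{a}$ and $\iim{F} \leftrightarrow \iim{b f_0}$ under the same identification.

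With these formulas the two modularity identities for $F$ fall out of the corresponding facts for $f_1$. For left modularity, $F^* F_* u = (f_1)^*(f_1)_* u \land \iim{a} = (u \lor \iker{f_1}) \land \iim{a}$ by left modularity of $f_1$; since $u \le \iim{a}$, modularity of $\Nsb{A_1}$ rewrites this as $u \lor (\iker{f_1} \land \iim{a})$, which is exactly $S \lor \iker{F}$. For right modularity, $F_* F^* t = (f_1)_*((f_1)^* t \land \iim{a}) = t \land (f_1)_* \iim{a} = t \land \iim{b f_0}$ by Frobenius reciprocity (a consequence of the modularity condition in $\cat$), which is exactly $T \land \iim{F}$. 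The main obstacle is the bookkeeping of the third paragraph, computing $F^*$, $F_*$, $\iker{F}$, and $\iim{F}$ explicitly and verifying that they land where claimed under the interval identification; once that is secured, the modular law and Frobenius reciprocity in $\cat$ finish the argument mechanically.
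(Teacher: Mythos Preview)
Your proposal addresses only the modularity claim (the ``Furthermore'' clause) and presupposes that $\arrow{\cat}$ is already known to be homological; you invoke ``the kernel formula in $\arrow{\cat}$ established above'' without proving it. But the theorem asserts both parts, and the homological-category part carries most of the weight: one must verify that null morphisms factor through null identities, that normal monomorphisms and normal epimorphisms are closed under composition, and above all the homology axiom~\ref{homologyaxiom}. None of these is automatic---the composition-closure argument in particular requires a careful chase using $b^* b_*$-closedness, and the homology axiom needs an explicit construction of the factorization and the verification $b^* g_1^* g_{1*} b_* f_0 = f_0$ from the hypothesis $\iker{g_1 b} \le f_0$. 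As written, your proposal is a proof of the second sentence only.

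For the modularity part, your argument is correct and takes a genuinely different route from the paper. The paper identifies $\Nsb{a}$ with the $a^* a_*$-closed elements of $\Nsb{A_0}$ and computes there: left modularity of $F$ comes from $(bf_0)^*(bf_0)_* s = s \lor \iker{bf_0}$, right modularity from a three-step chain using left and right modularity together with the modular law, and modularity of the lattice from a direct verification using that $a^* a_*$ preserves joins. You instead pass to the $A_1$ side via $a_*$, where right modularity collapses $\Nsb{a}$ to the interval $[\bot,\iim{a}]$ in $\Nsb{A_1}$; the lattice is then modular for free (intervals in modular lattices are modular), pushforward along $F$ becomes simply $(f_1)_*$, and right modularity of $F$ reduces to one application of Frobenius reciprocity. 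Your identifications $F^* t = f_1^* t \land \iim{a}$, $F_* u = (f_1)_* u$, $\iker{F} \leftrightarrow \iker{f_1} \land \iim{a}$ and $\iim{F} \leftrightarrow \iim{bf_0}$ are all correct. The trade-off is that your description of $\Nsb{a}$ already consumes right modularity of $a$, whereas the paper's $A_0$-side description is valid without any modularity hypothesis (which is why the paper uses it throughout the homological-category half as well); once modularity is assumed, your path is cleaner.
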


Here and in the following, we use notation like $a : A_0 \to A_1$ for morphisms that we consider as objects in $\arrow{\cat}$, and we generally draw them vertically.
We also indicate the two components of morphisms in $\arrow{\cat}$ by subscripts $0$ and $1$.

\begin{proof}
	It is clear that the collection of null morphisms in $\arrow{\cat}$ is an ideal since $\ideal$ is. An identity morphism in $\arrow{\cat}$ is null if and only if its underlying object, i.e.~the corresponding morphism in $\cat$, is null. This implies condition~\ref{null_exist}, that a morphism is null if and only if it factors through a null object, since every square~\eqref{square} with null diagonal factors as
	\[
		\begin{tikzcd}
			A_0 \ar["a"]{d} \ar[r,equal] & A_z \ar["f_0"]{r} \ar[d] & B_0 \ar["b"]{d} \\
			A_1 \ar["f_1"]{r} & B_1 \ar[r,equal] & B_1
		\end{tikzcd}
	\]
	with the diagonal of the original square, which is assumed to be in $\ideal$, as the morphism defining the object in the middle.

	We now get to the construction of kernels and cokernels of an arbitrary square~\eqref{square}. The kernel is given by the left square in the diagram
	\[
		\begin{tikzcd}
			\bullet \ar[\monic,"\iker{b f_0}"]{r} \ar{d} & A_0 \ar["f_0"]{r} \ar["a"]{d} & B_0 \ar["b"]{d} \\
			A_1 \ar[equal]{r} & A_1 \ar["f_1"]{r} & B_1
		\end{tikzcd}
	\]
	and the universal property is straightforward to check. Cokernels are constructed dually. 

	As a consequence of \Cref{closed_vs_ker}, the normal monomorphisms in $\arrow{\cat}$ are those squares which are isomorphic to squares of the form
	\[
		\begin{tikzcd}
			A_0 \ar["f_0",\monic]{r} \ar["b f_0"]{d} & B_0 \ar["b"]{d} \\
			B_1 \ar[equal]{r} & B_1
		\end{tikzcd}
	\]
	where $f_0$ is a normal monomorphism in $\cat$ which is $b^* b_*$-closed. Concerning closure under composition, suppose that we compose two normal monomorphisms of this form,
	\[
		\begin{tikzcd}[sep=1cm]
			A_0 \ar["f_0",\monic]{r} \ar["c g_0 f_0" description]{d} & B_0 \ar["cg_0" description]{d} \ar["g_0",\monic]{r} & C_0 \ar["c"]{d} \\
			B_1 \ar[equal]{r} & B_1 \ar[equal]{r} & B_1
		\end{tikzcd}
	\]
	where $f_0$ is $g_0^* c^* c_* g_{0*}$-closed and $g_0$ is $c^* c_*$-closed. In $\Nsb{B_0}$, we have
	\[
		f_0 \leq g_0^* g_{0*} f_0 \leq g_0^* c^* c_* g_{0*} f_0 = f_0,
	\]
	and therefore $f_0$ is $g_0^* g_{0*}$-closed as well. So we record that $f_0$ is the kernel of the morphism $\icoker{g_0 f_0} g_0$.

	Our goal is now to show that the composite $g_0 f_0$ is also normal in $\cat$ by virtue of being the kernel of its cokernel $\icoker{g_0 f_0}$. 
	To this end, consider the diagram
	\[
		\begin{tikzcd}
			& & & \bullet \ar["p"]{dr} \ar[dashed]{dl} \ar[dashed,bend right]{dlll} & & & \bullet \ar[bend left]{dd} \\
			A_0 \ar["f_0",\monic]{rr} \ar["c g_0 f_0"]{dd} & & B_0 \ar["cg_0"]{dd} \ar[\monic,"g_0"]{rr} & & C_0 \ar["c"]{dd} \ar["\icoker{g_0 f_0}" description,\epic]{urr} \ar["\icoker{g_0}" description,\epic]{drr} \\
			& & & & & & \bullet \\
			C_1 \ar[equal]{rr} & & C_1 \ar[equal]{rr} & & C_1
		\end{tikzcd}
	\]
	where $p$ is such that its composition with $\icoker{g_0 f_0}$ is null. Then also its composition with $\icoker{g_0}$ is null, and $p$ factors as indicated by the straight dashed arrow; but since $f_0$ is the kernel of $\icoker{g_0 f_0} g_0$, it follows that $p$ also factors as in the curved dashed arrow. Hence $g_0 f_0$ is the kernel of its cokernel, and therefore a normal monomorphism.

	For the closure of normal monomorphisms under composition, it remains to be shown that $g_0 f_0 \in \Nsb{C_0}$ is $c^* c_*$-closed.
	To see this, we write out the closedness assumptions on $f_0$ and $g_0$ as
	\begin{align}
		\label{cgf_ass}
		\iker{\icoker{cg_0f_0} cg_0} &= f_0, \\
		\label{cg_ass}
		\iker{\icoker{cg_0} c} &= g_0.
	\end{align}
	We then show that $g_0f_0$ has the universal property required of $\iker{\icoker{cg_0f_0}c}$, which proves the claim.
	This follows from the diagram
	\[
		\begin{tikzcd}
			& & & \bullet \ar["p"]{dr} \ar[dashed,"\tilde{p}" description]{dl} \ar[dashed,bend right,"\hat{p}"']{dlll} & & & \\
			A_0 \ar["f_0",\monic]{rr} & & B_0 \ar[\monic,"g_0"]{rr} & & C_0 \ar["c"]{dd} \\
			& & & & & & \\
			& & & & C_1 \ar[\epic,"\icoker{cgf}"]{dr} \ar[\epic,"\icoker{cg}"']{dl} \\
			& & & \bullet \ar{rr} & & \bullet
		\end{tikzcd}
	\]
	since assuming that $\icoker{cg_0f_0} cp$ is null produces the factorization $\tilde{p}$ by~\eqref{cg_ass}, and we then get $\hat{p}$ by~\eqref{cgf_ass}.

	The crucial missing part is the homology axiom~\ref{homologyaxiom}.
	For this, we need to show that given a normal monomorphism composable with a normal epimorphism in $\arrow{C}$ as in the diagram
	\beq
		\label{arrow_homax}
		\begin{tikzcd}[sep=1cm]
			A_0 \ar[\monic,"f_0"]{r} \ar["b f_0"']{d} & B_0 \ar[equal]{r} \ar["b" description]{d} & B_0 \ar["g_1 b"]{d} \\
			B_1 \ar[equal]{r} & B_1 \ar["g_1",\epic]{r} & C_1 
		\end{tikzcd}
	\eeq
	and such that the kernel of $g_1 b$ factors through $f_0$, the composite square can be factored in $\arrow{\cat}$ into a normal epimorphism followed by a normal monomorphism.
	Here, by \Cref{closed_vs_ker} the normality assumptions on the two original squares amount to the equations
	\[
		b^* b_* f_0 = f_0, \qquad b_* b^* \iker{g_1} = \iker{g_1}.
	\]
	We claim that the diagram
	\[
		\begin{tikzcd}[sep=1cm]
			A_0 \ar[equal]{r} \ar["bf_0"']{d} & A_0 \ar["g_1 b f_0" description]{d} \ar["f_0",\monic]{r} & B_0 \ar["g_1b"]{d} \\
			B_1 \ar["g_1",\epic]{r} & C_1 \ar[equal]{r} & C_1 
		\end{tikzcd}
	\]
	provides the desired factorization. In order to see this, it is enough to note that the composite square is clearly the same as in~\eqref{arrow_homax} and to prove the relevant normality conditions, which are
	\[
		b^* g_1^* g_{1*} b_* f_0 = f_0,
	\]
	and the analogous statement for $g_1$ which then follows by duality.
	Using the assumption
	\[
		b^* \iker{g_1} = \iker{g_1 b} \le f_0
	\]
	in $\Nsb{B_0}$, we can conclude that
	\[
		\iker{g_1} = b_* b^* \iker{g_1} \le b_* f_0
	\]
	in $\Nsb{B_1}$ by the monotonicity of $b_*$. Then
	\[
		b^* g_1^* g_{1*} b_* f_0 = b^* (b_* f_0 \lor \iker{g_1}) = b^* b_* f_0 = f_0,
	\]
	where the first equation is by~\eqref{join_formula}.
	This finishes the proof of the main statement.

	Concerning the statement on modularity, we first analyze the construction of pushforwards and pullbacks of normal subobjects.
	So consider any normal subobject that we wish to push forward along an arbitrary morphism,
	\beq
		\label{arrow_pushforward}
		\begin{tikzcd}
			\bullet \ar{d} \ar[\monic,"s"]{r} & A_0 \ar["f_0"]{r} \ar["a"]{d} & B_0 \ar["b"]{d} \\
			A_1 \ar[equal]{r} & A_1 \ar["f_1"]{r} & B_1 
		\end{tikzcd}
	\eeq
	where normality tells us that $a^* a_* s = s$.
	We first need to form the cokernel of the composite morphism. This is given by the right-hand square in
	\[
		\begin{tikzcd}[sep=1.3cm]
			\bullet \ar{d} \ar[\monic,"s"]{r} & A_0 \ar["f_0"]{r} \ar["a"]{d} & B_0 \ar["b"]{d} \ar[equal]{r} & B_0 \ar{d} \\
			A_1 \ar[equal]{r} & A_1 \ar["f_1"]{r} & B_1 \ar[\epic,swap,"\icoker{bf_0s}"]{r} & \bullet
		\end{tikzcd}
	\]
	Hence taking the kernel of this cokernel gives us
	\[
		\begin{tikzcd}[row sep=1.1cm,column sep=2.2cm]
			A_0 \ar[\monic,"\iker{\icoker{bf_0s}b}" shift left=1cm]{r} \ar{d} & B_0 \ar["b"]{d} \ar[equal]{r} & B_0 \ar{d} \\
			B_1 \ar[equal]{r} & B_1 \ar[\epic,swap,"\icoker{bf_0s}"]{r} & \bullet
		\end{tikzcd}
	\]
	where the left-hand square now is the desired pushforward of the original normal monomorphism. The morphism on the upper left is given by
	\[
		\iker{b\hspace{1pt}\icoker{bf_0s}} = b^* \iim{bf_0s} = b^* b_* \iim{f_0 s} = b^* b_* f_{0*} s,
	\]
	which incidentally also makes its normality manifest.
	This finishes the description of pushforwards.
	We may think of this construction this construction as taking the pushforward of $s$ in $\cat$ followed by normalization, which amounts to the application of $b^* b_*$.

	For the construction of pullbacks of normal monomorphisms, we start with
	\[
		\begin{tikzcd}
			& \bullet \ar[\monic,"t"]{dr} \ar{dd} \\
			A_0 \ar["a"]{dd} \ar[near start,crossing over,"f_0"]{rr} & & B_0 \ar["b"]{dd} \\
			& B_1 \ar[equal]{dr} \\
			A_1 \ar["f_1"]{rr} & & B_1
		\end{tikzcd}
	\]
	We first need to take the cokernel in $\arrow{\cat}$, which is given by
	\[
		\begin{tikzcd}[sep=1.3cm]
			A_0 \ar["a"]{d} \ar["f_0"]{r} & B_0 \ar["b"]{d} \ar[equal]{r} & B_0 \ar{d} \\
			A_1 \ar["f_1"]{r} & B_1 \ar[\epic,"\icoker{bt}"]{r} & \bullet
		\end{tikzcd}
	\]
	and then the kernel of the composition. This means that we end up with
	\[
		\begin{tikzcd}[row sep=1.1cm,column sep=2.2cm]
			\bullet \ar["\iker{\icoker{bt}bf_0}",\monic]{r} \ar{d} & A_0 \ar["a"]{d} \ar["f_0"]{r} & B_0 \ar["b"]{d} \ar[equal]{r} & B_0 \ar{d} \\
			A_1 \ar[equal]{r} & A_1 \ar["f_1"]{r} & B_1 \ar[\epic,"\icoker{bt}"]{r} & \bullet
		\end{tikzcd}
	\]
	The resulting morphism top left can be written as
	\[
		\iker{\icoker{bt}bf_0} = f_0^* b^* \iim{bt} = f_0^* b^* b_* t = f_0^* t,
	\]
	which we interpret as stating that pullbacks are constructed just as in $\cat$ itself, and the normality is automatic.

	We then prove that the left modularity equation~\eqref{left_modular} holds.
	Starting with~\eqref{arrow_pushforward} and first taking the pushforward and then the pullback of the normal monomorphism on the left along the square on the right gives us the normal monomorphism with
	\[
		f_0^* b^* b_* f_{0*} s = (b f_0)^* (b f_0)_* s
	\]
	on top.
	If $\cat$ satisfies left modularity for all morphisms, then this can be evaluated to $s \lor \iker{b f_0}$, which proves the left modularity equation in $\arrow{\cat}$.
	For right modularity, we similarly compute
	\begin{align*}
		b^* b_* f_{0*} f_0^* t & = b^* b_* (t \land \iim{f_0}) \\
			& = (t \land \iim{f_0}) \lor \iker{b} \\
			& = t \land (\iim{f_0} \lor \iker{b}) \\
			& = t \land b^* b_* \iim{f_0},
	\end{align*}
	using right and left modularity as well as modularity of the normal subobject lattice in the middle step.
	The latter is based on the fact that $\iker{b} \leq t$, which follows from the normality assumption which guarantees that $t$ is $b^* b_*$-closed.
	Since the image of the $f$ morphism in $\arrow{\cat}$ has $b^* b_* \iim{f_0}$ as the morphism on top, right modularity follows.

	It remains to be shown that every lattice of normal subobjects in $\arrow{\cat}$ is modular. We start with $s_1, s_2, t \in \Nsb{A_0}$ that are $a^* a_*$-closed and satisfy $a_1 \le a_2$, and we prove the modularity equation in the subset of $a^* a_*$-closed elements in $\Nsb{A_0}$.
	By left modularity, the closure operator $a^* a_*$ preserves joins (but not necessarily meets).
	Therefore the desired modularity equation is, expressed in terms of the lattice structure of $\Nsb{A_0}$,
	\begin{align*}
		s_1 \lor a^* a_* (t \land s_2) & = (s_1 \lor \iker{a}) \lor (t \land s_2) \\
			& = (s_1 \lor \iker{a} \lor t) \land s_2 \\
			& = \iker{a} \lor ((s_1 \lor t) \land s_2) \\
			& = a^* a_*((s_1 \lor t) \land s_2),
	\end{align*}
	where the application of modularity in the second and third steps also uses $\iker{a} \le s_2$, which is a consequence of the $a^* a_*$-closedness of $s_2$.
	This finally completes the proof.
\end{proof}

We separately record an important observation made in the proof.

\begin{prop}
	For every object $a : A_0 \to A_1$ in $\arrow{\cat}$, its lattice of normal subobjects has natural identifications
	\[
		\Nsb{a} = \{ s \in \Nsb{A_0} \mid a^* a_* s = s\} = \{t \in \Nsb{A_1} \mid a_* a^* t = t \}.
	\]
\end{prop}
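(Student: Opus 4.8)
The plan is to deduce both identifications from the analysis already carried out in the proof of \Cref{Carrow}, together with a standard fact about Galois connections; no new hard work should be required.

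First I would establish the left-hand identification $\Nsb{a} = \{s \in \Nsb{A_0} \mid a^* a_* s = s\}$. In the proof of \Cref{Carrow} we saw, via \Cref{closed_vs_ker}, that every normal monomorphism into the object $a : A_0 \to A_1$ is isomorphic in $\arrow{\cat}$ to a square whose lower edge is $\id_{A_1}$, whose right edge is $a$, and whose upper edge is an $a^* a_*$-closed normal monomorphism $s : \bullet \to A_0$ in $\cat$, the left edge then being forced to equal $a s$. Conversely, every such $s$ gives rise to a normal subobject of $a$. This sets up a bijection between $\Nsb{a}$ and $\{s \in \Nsb{A_0} \mid a^* a_* s = s\}$, and to see that it respects the order I would note that, since all representing squares share the lower edge $\id_{A_1}$, a morphism in $\arrow{\cat}$ witnessing a factorization of one such square through another is forced to have identity as its lower component and hence reduces precisely to a factorization of the upper edges in $\cat$. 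Thus $s \le s'$ holds in $\Nsb{a}$ if and only if it holds in $\Nsb{A_0}$.

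Next I would obtain the right-hand identification from the Galois connection $a_* \dashv a^*$ of \Cref{galois_conn}. Here $a^* a_*$ is the induced closure operator on $\Nsb{A_0}$, while $a_* a^*$ is the induced coclosure (interior) operator on $\Nsb{A_1}$. It is a general fact about Galois connections that the two adjoints restrict to mutually inverse order isomorphisms between the fixed points of the closure operator on the domain and the fixed points of the coclosure operator on the codomain: if $a^* a_* s = s$, then the triangle identity $a_* a^* a_* = a_*$ shows $a_* s$ is $a_* a^*$-fixed while $a^* a_* s = s$ shows $a^*$ returns $s$, and dually starting from a fixed point $t$. Applying this yields the order isomorphism $\{s \in \Nsb{A_0} \mid a^* a_* s = s\} \cong \{t \in \Nsb{A_1} \mid a_* a^* t = t\}$ realized by $a_*$ and $a^*$.

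I do not anticipate a real obstacle, as the substantive content was already extracted during the proof of \Cref{Carrow}. The only points needing care are to check that both bijections preserve the order rather than merely matching underlying sets — handled above by the remark on squares with identity lower edge and by the standard Galois-connection statement respectively — and, if one wants to justify the word \emph{natural}, to observe that these identifications intertwine the pushforward and pullback operations computed in \Cref{Carrow} with those of $\cat$.
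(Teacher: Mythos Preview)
Your proposal is correct and follows essentially the paper's route: the paper does not give a separate proof but simply records the observation from the proof of \Cref{Carrow}, where the characterization of normal monomorphisms in $\arrow{\cat}$ as squares with identity lower edge and $a^*a_*$-closed top edge is made explicit, and the dual statement about normal epimorphisms is invoked. Your derivation of the second identification via the fixed-point isomorphism of the Galois connection $a_* \dashv a^*$ is a clean alternative to appealing to duality directly, and it yields the same map (a normal subobject with top edge $s$ is sent to $a_* s = \iim{as}$, which is exactly the kernel of the bottom edge of the corresponding cokernel square), so the two routes agree.
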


Let us now get to homology. We can use the canonical inclusion functor $\cat\to\arrow{\cat}$, which maps every object to its identity morphism, in order to treat the homology of chain complexes in $\cat$ in the more well-behaved category $\arrow{\cat}$. Here is the main idea:

\begin{prop}
	Given two composable squares
	\[
		\begin{tikzcd}
			A_0 \ar["f_0"]{r} \ar["a"]{d} & B_0 \ar["b"]{d} \ar["g_0"]{r} & C_0 \ar["c"]{d} \\
			A_1 \ar["f_1"]{r} & B_1 \ar["g_1"]{r} & C_1
		\end{tikzcd}
	\]
	with null composite in $\arrow{\cat}$, their homology object is given by
	\beq
		\label{arrow_homology_gen}
		\begin{tikzcd}[sep=1cm]
			\iKer{g_1 b} \ar["\icoker{bf_0} f_0 \iker{g_1 b}" description]{d} \\
			\iCoker{bf_0}
		\end{tikzcd}
	\eeq
\end{prop}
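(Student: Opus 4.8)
The plan is to realize the relevant kernel and cokernel in $\arrow{\cat}$ explicitly via the constructions from the proof of \Cref{Carrow}, compose them, and then read off the homology object as the image of the composite. By the kernel construction in $\arrow{\cat}$ together with the identity $c g_0 = g_1 b$, the normal monomorphism $i \coloneqq \iker{(g_0,g_1)}$ has domain object $b\,\iker{g_1 b} : \iKer{g_1 b} \to B_1$ and is represented in $\Nsb{b}$ by the $b^* b_*$-closed subobject $\iker{g_1 b} \in \Nsb{B_0}$. Dually, the normal epimorphism $q \coloneqq \icoker{(f_0,f_1)}$ has codomain object $D$ with underlying morphism $\icoker{bf_0}\,b : B_0 \to \iCoker{bf_0}$. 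Composing the two squares, $qi$ has components $\iker{g_1 b}$ and $\icoker{bf_0}$, so its diagonal is $\icoker{bf_0}\,b\,\iker{g_1 b} : \iKer{g_1 b} \to \iCoker{bf_0}$, the morphism of \eqref{arrow_homology_gen}.

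Since $\arrow{\cat}$ is homological by \Cref{Carrow}, the homology object equals $\iIm{qi} = \iCoim{qi}$, and by the pushforward description of the homology object it is the domain of the normal monomorphism $q_* i$ into $D$. Here I would invoke the explicit formula for pushforwards in $\arrow{\cat}$ obtained in the proof of \Cref{Carrow}: since the relevant component of $q$ is an identity, the formula represents $q_* i$ in $\Nsb{B_0}$ by the closure $(\icoker{bf_0}b)^*(\icoker{bf_0}b)_*\,\iker{g_1 b}$.

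The crux — and the step I expect to be the main obstacle — is to show this closure leaves $\iker{g_1 b}$ unchanged. This is exactly where the null-composite hypothesis enters: it states $g_1 b f_0 = c g_0 f_0 \in \ideal$, so $g_1$ factors as $\bar g_1\,\icoker{bf_0}$, whence $\iker{g_1 b} = \iker{\bar g_1\,(\icoker{bf_0}b)}$ is the kernel of a morphism factoring through $\icoker{bf_0}b$. By \Cref{closed_vs_ker} the $(\icoker{bf_0}b)^*(\icoker{bf_0}b)_*$-closed subobjects of $B_0$ are precisely those of this form, so $q_* i = \iker{g_1 b}$ and the domain object of the normal monomorphism $q_* i$ into $D$ is indeed $\icoker{bf_0}\,b\,\iker{g_1 b} : \iKer{g_1 b} \to \iCoker{bf_0}$; the remaining identifications are routine bookkeeping. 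To confirm consistency I would treat the coimage side by the dual factorization — now $f_0$ factors through $\iker{g_1 b}$, again by the null-composite hypothesis — which shows the normal epimorphism out of $i$ has the same codomain object, completing the identification.
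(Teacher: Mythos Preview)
Your argument is correct. The paper's own proof is a one-line pointer back to the homology-axiom verification in \Cref{Carrow}: there, for any normal monomorphism $i$ and normal epimorphism $q$ in $\arrow{\cat}$ satisfying $\iker{q}\le i$, the required factorization of $qi$ was written down explicitly with the \emph{diagonal of the composite square} as the middle object. Substituting $i=\iker{(g_0,g_1)}$ (top component $\iker{g_1 b}$) and $q=\icoker{(f_0,f_1)}$ (bottom component $\icoker{bf_0}$) gives the diagonal $\icoker{bf_0}\,b\,\iker{g_1 b}$ at once; the null-composite hypothesis is precisely what guarantees $\iker{q}\le i$.

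Your route---computing $q_*i$ via the pushforward formula from the same proof and then invoking \Cref{closed_vs_ker} to see that the closure $(\icoker{bf_0}b)^*(\icoker{bf_0}b)_*$ fixes $\iker{g_1 b}$---is a legitimate alternative. In effect you are re-deriving, for this particular $i$ and $q$, the closedness identity $b^*g_1^*g_{1*}b_*f_0=f_0$ that the homology-axiom proof established in general, and your version has the merit of isolating exactly how the factorization $g_1=\bar g_1\,\icoker{bf_0}$ (supplied by the null-composite hypothesis) forces the closure to be trivial. The paper's direct appeal to the ready-made factorization is shorter; your approach is more self-contained.
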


The proof follows the verification of the homology axiom in the proof of \Cref{Carrow}, so we omit further details.

Note that the homology object only depends on the composable triple $(f_0,b,g_1)$. In the case where the sequence comes from $\cat$, meaning that its three $\arrow{\cat}$-objects are identities,
\[
	\begin{tikzcd}
		A \ar["f"]{r} \ar[equal]{d} & B \ar[equal]{d} \ar["g"]{r} & C \ar[equal]{d} \\
		A \ar["f"]{r} & B \ar["g"]{r} & C
	\end{tikzcd}
\]
the homology object specializes to
\beq
	\label{arrow_homology}
	\begin{tikzcd}[sep=1cm]
		\iKer{g} \ar["\icoker{f} \iker{g}" description]{d} \\
		\iCoker{f}
	\end{tikzcd}
\eeq
We think of this as the canonical map from cycles to chains modulo boundaries.
Working in $\arrow{\cat}$ instead of $\cat$ hence amounts to keeping track of this information more explicitly.

Upon combining the long exact sequences of Grandis's \Cref{longexact} with our \Cref{Carrow}, we get the following version of long exact sequences. 

\begin{cor}
	\label{arrowlongexact}
	Let $\cat$ be a category with ideal of null morphisms $\ideal$ with respect to which kernels and cokernels exist. Let
	\[
		\begin{tikzcd}
			(C_\bullet,d_\bullet^C) \ar["i_\bullet"]{r} & (D_\bullet,d_\bullet^D) \ar["q_\bullet"]{r} & (E_\bullet,d_\bullet^E)
		\end{tikzcd}
	\]
	be a short exact sequence of chain complexes in $\cat$.
	Then there is an induced long sequence of homology objects in the arrow category, which in $\cat$ takes the form
	\beq
		\label{arrow_les}
		\begin{tikzcd}[row sep=1.4cm,column sep=.7cm]
			\ldots \ar{r} & \iKer{d_{n+1}^E q_{n+1}} \ar{d} \ar["\partial_{n+1,0}"]{r} & \iKer{d_n^D i_n} \ar{d} \ar{r} & \iKer{d_n^D} \ar{d} \ar{r} & \iKer{d_n^E q_n} \ar{d} \ar["\partial_{n,0}"]{r} & \iKer{d_{n-1}^D i_{n-1}} \ar{d} \ar{r} & \ldots \\
			\ldots \ar{r} & \iCoker{q_n d_{n+2}^D}          \ar["\partial_{n+1,1}"]{r} & \iCoker{i_n d_{n+1}^C}  \ar{r} & \iCoker{d_{n-1}^D}  \ar{r} & \iCoker{q_n d_{n+1}^D}  \ar["\partial_{n,1}"]{r} & \iCoker{i_{n-1} d_n^C}      \ar{r} & \ldots
		\end{tikzcd}
	\eeq
	\begin{enumerate}
		\item The unlabelled arrows are the obvious ones.
		\item The connecting morphisms $\partial_{n,\ast}$ are natural in the short exact sequence.
		\item The composite diagonal of every two subsequent square is null.
		\item The sequence is exact if the following instances of the modularity condition hold in $\Nsb{D_n}$ for every $n$:
			\begin{align*}
				\iim{d_{n+1}^D} \lor (C_n \land \iker{d_n^D}) = (\iim{d_{n+1}^D} \lor C_n) \land \iker{d_n^D}, \\[4pt]
				d_n^{D*} d_{n*}^D C_n = C_n \lor \iker{d_n^D}, \qquad d_{n*}^D d_n^{D*} C_{n-1} = C_{n-1} \land \iim{d_n^D}.
			\end{align*}
	\end{enumerate}
\end{cor}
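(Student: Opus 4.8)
The plan is to obtain the sequence as a direct instance of Grandis's \Cref{longexact}, applied inside the homological category $\arrow{\cat}$ (which is homological by \Cref{Carrow}). The one point that genuinely requires care is that the canonical inclusion $\iota : \cat \to \arrow{\cat}$ does \emph{not} preserve short exactness for the ``null diagonal'' ideal. Indeed, for a normal mono $i_n : C_n \to D_n$ the diagonal of the square $\iota(i_n)$ is $i_n$, so by the cokernel construction in the proof of \Cref{Carrow} the cokernel object of $\iota(i_n)$ in $\arrow{\cat}$ is $\icoker{i_n} = q_n$, i.e.\ the object $(q_n : D_n \to E_n)$ rather than $\id_{E_n}$; dually, the kernel object of $\iota(q_n)$ is $\iker{q_n} = i_n$, i.e.\ $(i_n : C_n \to D_n)$ rather than $\id_{C_n}$. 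Hence $\iota$ of the given short exact sequence is not short exact in $\arrow{\cat}$, and the correct short exact sequence of chain complexes to feed into Grandis's theorem must be assembled by hand.

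First I would build three chain complexes in $\arrow{\cat}$. Let $(i_\bullet)$ be the complex whose degree-$n$ object is $(i_n : C_n \to D_n)$ and whose differential $(i_n) \to (i_{n-1})$ is the commuting square with horizontals $d_n^C, d_n^D$ and verticals $i_n, i_{n-1}$; let $\iota(D)$ be the image of $D$, with objects $\id_{D_n}$ and differentials $\iota(d_n^D)$; and let $(q_\bullet)$ be the analogous complex with objects $(q_n : D_n \to E_n)$ built from $d^D, d^E$. Each is a chain complex since its differentials have null diagonal; for $(i_\bullet)$ the diagonal of the double composite is $i_{n-2}\,d_{n-1}^C d_n^C$, which is null because $d_{n-1}^C d_n^C$ is. I would then define chain maps $\kappa : (i_\bullet) \to \iota(D)$ and $\gamma : \iota(D) \to (q_\bullet)$ degreewise by $\kappa_n = (i_n, \id_{D_n})$ and $\gamma_n = (\id_{D_n}, q_n)$; the chain-map equations reduce exactly to $i_{n-1} d_n^C = d_n^D i_n$ and $q_{n-1} d_n^D = d_n^E q_n$, which hold because $i_\bullet$ and $q_\bullet$ are chain maps.

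The heart of the argument is to verify that $(i_n) \xrightarrow{\kappa_n} \id_{D_n} \xrightarrow{\gamma_n} (q_n)$ is short exact in $\arrow{\cat}$ for each $n$. Using the explicit constructions from the proof of \Cref{Carrow}, $\gamma_n$ is precisely the cokernel of $\iota(i_n)$, and its kernel has object $\iker{q_n} = i_n$ with kernel map $\kappa_n$; here one uses that $i_n = \iker{q_n}$ and $q_n = \icoker{i_n}$ in $\cat$, which is the degreewise short exactness of the original sequence. Thus $\kappa_n = \iker{\gamma_n}$ and $\gamma_n = \icoker{\kappa_n}$, giving short exactness. Applying \Cref{longexact} to $(i_\bullet) \to \iota(D) \to (q_\bullet)$ then yields the long exact sequence, and I would read off its homology objects through \eqref{arrow_homology_gen}: the triples computing $H_n(\iota D)$, $H_n((i_\bullet))$, $H_n((q_\bullet))$ give respectively $\iKer{d_n^D}$ over $\iCoker{d_{n+1}^D}$, then $\iKer{d_n^D i_n}$ over $\iCoker{i_n d_{n+1}^C}$, and $\iKer{d_n^E q_n}$ over $\iCoker{q_n d_{n+1}^D}$, which are exactly the columns of \eqref{arrow_les} (with the connecting morphisms $\partial_{n,0}, \partial_{n,1}$ the two components of Grandis's $\partial_n$). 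Clauses (i)--(iii) transfer verbatim from the corresponding clauses of \Cref{longexact}.

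For clause (iv), I would use that the middle complex has objects $\id_{D_n}$, so by the proposition recorded after \Cref{Carrow} one has $\Nsb{\id_{D_n}} = \Nsb{D_n}$, and --- as established in the modularity part of the proof of \Cref{Carrow} --- pullbacks, pushforwards, images and kernels at an identity object all reduce to the corresponding operations in $\cat$. Under this identification the image of the subcomplex $(i_\bullet)$ in $\iota(D)$ at degree $n$ is the normal subobject $\iim{i_n} = C_n \in \Nsb{D_n}$, so the instances of the modularity condition demanded by \Cref{longexact}(iv) at $\iota(D)$ become verbatim the conditions stated in the corollary. The main obstacle is the first step: recognizing that one must not transport the given sequence along $\iota$, but instead construct $(i_\bullet) \to \iota(D) \to (q_\bullet)$ and check its degreewise short exactness via the explicit (co)kernel descriptions. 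After that, everything is a bookkeeping translation through \eqref{arrow_homology_gen} and the push/pull computations already carried out for \Cref{Carrow}.
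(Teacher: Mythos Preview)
Your proposal is correct and follows essentially the same route as the paper: form the short exact sequence $(i_\bullet)\xrightarrow{(i_n,\id)}\iota(D)\xrightarrow{(\id,q_n)}(q_\bullet)$ in $\arrow{\cat}$, apply \Cref{longexact} via \Cref{Carrow}, and read off the homology objects through~\eqref{arrow_homology_gen}. Your write-up is in fact more explicit than the paper's, which simply displays this short exact sequence and asserts that the homology computation and the modularity translation go through.
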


Since the assumptions are weaker, this result has much greater generality than Grandis's, at the cost of the additional complication of a larger diagram.
It should also be noted that in this diagram the only homology objects in the sense of~\eqref{arrow_homology} are those of the chain complex $D$; the other vertical arrows are not of this kind.

\begin{proof}
	The original short exact sequences
	\[
		\begin{tikzcd}
			C_n \ar[\monic,"i_n"]{r} & D_n \ar[\epic,"q_n"]{r} & E_n
		\end{tikzcd}
	\]
	in $\cat$ induce short exact sequences in $\arrow{\cat}$ of the form
	\[
		\begin{tikzcd}
			C_n \ar[\monic,"i_n"]{d} \ar[\monic,"i_n"]{r} & D_n \ar[equal]{r} \ar[equal]{d} & D_n \ar[\epic,"q_n"]{d} \\
			D_n \ar[equal]{r} & D_n \ar[\epic,"q_n"]{r} & E_n
		\end{tikzcd}
	\]
	We therefore obtain a short exact sequence of chain complexes in $\arrow{\cat}$, and \Cref{longexact} applies thanks to \Cref{Carrow}.
	Using~\eqref{arrow_homology_gen} and~\eqref{arrow_homology} for the computation of the homology objects then produces the vertical arrows in~\eqref{arrow_les}.
	For example for the second vertical arrow, we start with
	\[
		\begin{tikzcd}
			C_{n+1} \ar[\monic,"i_{n+1}"]{d} \ar["d_{n+1}^C"]{r} & C_n \ar[\monic,"i_n"]{d} \ar["d_n^C"]{r} & C_{n-1} \ar[\monic,"i_{n-1}"]{d} \\
			D_{n+1} \ar["d_{n+1}^D"]{r} & D_n \ar["d_n^D"]{r} & D_{n-1}
		\end{tikzcd}
	\]
	and apply~\eqref{arrow_homology_gen}.

	Now the given modularity conditions translate exactly into those of \Cref{longexact}.
\end{proof}

\begin{rem}
	Long exact sequences in the form~\eqref{arrow_les} may even be of interest in classical cases, say when $\cat$ is an abelian category, due to the fact that these homology objects contain some extra information by keeping track of cycles and the quotient by boundaries separately.
	On the other hand, as far as we can see their exactness property does not provide additional information beyond the exactness of the standard long exact sequences.
\end{rem}

\section{Homological algebra up to \texorpdfstring{$\eps$}{ε}?}
\label{seceps}

We now apply the above results in the context of normed spaces in order to sketch an approach to homological algebra for ``$\eps$-curved complexes'', by which we mean chain complexes of normed spaces where the differential only squares to some map of norm at most $\eps$. We fix some real number $\eps \in (0,1)$ throughout as a parameter for the theory.

For technical convenience---the reason for which will become clear in the construction of $\eps$-cokernels at~\eqref{epscokernel}---we work with \newterm{seminorms} and \newterm{seminormed spaces}. These are defined just like norms and normed spaces except in that the non-degeneracy condition in the definition of norm is dropped~\cite[Section~II.1]{schaefer}.
We assume $\R$ as the base field throughout.\footnote{We expect that all the definitions and results of this section can be adapted straightforwardly to the complex case, but we have not done so explicitly.}

\begin{defn}
	\label{normcatdef}
	$\Norm$ is the category with seninormed spaces as objects, and morphisms $(V,\|\cdot\|)\to (W,\|\cdot\|)$ equivalence classes of linear maps $f : V \to W$ of operator norm $\|f\| \leq 1$, where the equivalence relation is defined as $f\sim g$ if and only if
	\[
		\| f(x) - g(x) \| = 0 \qquad \forall x\in V.
	\]
\end{defn}

It is understood that composition in $\Norm$ is given by ordinary composition of representing maps, and it is easy to see that this is well-defined.
We abuse terminology and notation by leaving the distinction between morphisms and their representing linear maps implicit.

\begin{lem}
	\label{seminorm_norm}
	Every seminormed space $(V,\|\cdot\|)$ is isomorphic in $\Norm$ to the normed space $(V / V_0,{\|\cdot\|'})$, where
	\[
		V_0 \coloneqq \{ x \in V \mid \|x\| = 0 \}
	\]
	is the null space and $\|\cdot\|'$ is the induced quotient norm given by $\|\cdot\|$ on representatives.
\end{lem}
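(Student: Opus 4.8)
The plan is to exhibit the quotient map $\pi : V \to V/V_0$, $x \mapsto x + V_0$, as an isomorphism in $\Norm$, with inverse given by any linear section.

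First I would record the elementary point underlying everything. Since the seminorm is subadditive and absolutely homogeneous, $V_0$ is a linear subspace, and the two-sided triangle estimate yields $\|x + v\| = \|x\|$ whenever $v \in V_0$. Consequently the quotient seminorm $\|x + V_0\|' = \inf_{v \in V_0}\|x + v\|$ equals $\|x\|$. This simultaneously shows that $\pi$ is an isometry (so $\|\pi\| \le 1$ and $\pi$ is a morphism of $\Norm$) and that $\|\cdot\|'$ is non-degenerate, since $\|x + V_0\|' = 0$ forces $x \in V_0$ and hence $x + V_0 = 0$; thus $(V/V_0, \|\cdot\|')$ really is a normed space.

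Next I would construct the inverse. Choosing a linear complement of $V_0$ (available via Zorn's lemma, or by lifting a basis of $V/V_0$) produces a linear section $s : V/V_0 \to V$ with $\pi \circ s = \id_{V/V_0}$. Because $s(x + V_0)$ is by construction a representative of its own coset, the estimate above forces $\|s(x + V_0)\| = \|x + V_0\|'$, so $s$ is automatically an isometry and hence a morphism of $\Norm$; note this holds irrespective of which complement was chosen, a feature special to quotienting by the null space rather than an arbitrary subspace. It then remains to check that $\pi$ and $s$ are mutually inverse \emph{in} $\Norm$: the composite $\pi \circ s$ equals $\id_{V/V_0}$ on the nose, while for $s \circ \pi$ one has $s(\pi(x)) - x \in V_0$ for every $x$, so $\|s(\pi(x)) - x\| = 0$, which is exactly the relation $s \circ \pi \sim \id_V$.

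The only real subtlety is conceptual rather than computational: one must use that in $\Norm$ a morphism need only be inverted up to the equivalence relation $\sim$, so that the section $s$ — which does not strictly invert $\pi$ on $V$ — nonetheless represents a genuine two-sided inverse. It is precisely the identification of maps differing by something landing in the null space that promotes the quotient map to an isomorphism, and everything else reduces to the estimate $\|x + v\| = \|x\|$ for $v \in V_0$.
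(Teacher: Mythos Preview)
Your argument is correct and follows essentially the same route as the paper: exhibit the quotient map as a morphism of $\Norm$, choose a linear section as its inverse, and verify that one composite is the identity on the nose while the other is $\sim \id$ because the difference lands in $V_0$. Your version is in fact a bit more careful than the paper's---you make explicit the isometry property $\|x+V_0\|'=\|x\|$ and the non-degeneracy of the quotient norm---but the skeleton of the proof is identical.
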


\begin{proof}
	The triangle inequality immediately shows that $\|\cdot\|'$ is well-defined.
	We show that the canonical projection $q : V \to V / V_0$ is an isomorphism. It satisfies $\|q\| \le 1$ by definition. Since every surjective linear map has a linear section, we can find a right inverse $j : V/V_0 \to V$ for $q$, and it is again clear that $\|j\| \le 1$.
	While $qj$ is equal to the identity map on the nose, the composite $qj$ also represents the identity morphism since $x - q(j(x))$ is null for every $x \in V$.
\end{proof}
		
\begin{prop}	
	$\Norm$ is equivalent to the category of normed spaces and linear maps of operator norm $\le 1$.
\end{prop}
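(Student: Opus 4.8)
The plan is to exhibit the obvious comparison functor and verify it is fully faithful and essentially surjective; by the standard characterization of equivalences of categories, this suffices. Write $\mathsf{N}_0$ for the category whose objects are genuine normed spaces and whose morphisms are linear maps of operator norm $\le 1$, and consider the inclusion functor $\iota : \mathsf{N}_0 \to \Norm$ that regards a normed space as a seminormed space (its null space being $\{0\}$) and sends a short linear map to its $\sim$-equivalence class. This is well-defined and functorial because composition in $\Norm$ is by composition of representatives and identities are represented by identity maps.

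Essential surjectivity of $\iota$ is precisely \Cref{seminorm_norm}: every seminormed space $V$ is isomorphic in $\Norm$ to the normed space $V / V_0$, and the latter lies in the image of $\iota$. For full faithfulness, the key observation is that the equivalence relation defining $\Norm$-morphisms trivializes when the target is a genuine normed space. Indeed, if $V, W$ are normed and $f, g : V \to W$ are short linear maps with $f \sim g$, then $\|f(x) - g(x)\|' = 0$ for all $x$, and nondegeneracy of the norm on $W$ forces $f(x) = g(x)$, i.e.\ $f = g$ as linear maps. Thus each $\Norm$-morphism between normed spaces has a unique representing short linear map, yielding a bijection between the hom-set $\Norm(\iota V, \iota W)$ and the set of short linear maps $V \to W$, which is exactly $\mathsf{N}_0(V, W)$. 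This bijection is manifestly compatible with composition and identities, so $\iota$ is fully faithful.

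Being fully faithful and essentially surjective, $\iota$ is an equivalence of categories. I expect no genuine obstacle here: the only point requiring any care is the collapse of the equivalence relation on normed targets, which is immediate from nondegeneracy, together with the appeal to \Cref{seminorm_norm} for essential surjectivity. (One could equally phrase the argument as noting that the full subcategory of $\Norm$ spanned by normed spaces is replete-equivalent to $\Norm$ via \Cref{seminorm_norm}, and that this full subcategory \emph{is} $\mathsf{N}_0$ on the nose, since its hom-sets coincide by the triviality of $\sim$.)
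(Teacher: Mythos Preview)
Your proof is correct and follows essentially the same approach as the paper: construct the inclusion functor from normed spaces into $\Norm$, invoke \Cref{seminorm_norm} for essential surjectivity, and observe that full faithfulness holds because the equivalence relation $\sim$ collapses when the target is a genuine normed space. The paper simply asserts the latter as ``clearly fully faithful,'' whereas you spell out the nondegeneracy argument explicitly.
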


\begin{proof}
	The inclusion functor from the standard category of normed spaces into $\Norm$ is clearly fully faithful.
	It is essentially surjective by \Cref{seminorm_norm}.
\end{proof}

\begin{lem}
	\label{norm_iso}
	A linear map $f : V \to W$ defines an isomorphism in $\Norm$ if and only if the following hold:
	\begin{enumerate}
		\item $\|f(x)\| = \|x\|$ for all $x \in V$.
		\item $f$ is \newterm{essentially surjective}: for every $y \in W$ there is $x \in V$ with $\| y - f(x) \| = 0$.
	\end{enumerate}
\end{lem}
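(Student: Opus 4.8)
The plan is to prove both implications directly, handling the degeneracy of the seminorm by reducing the nontrivial direction to the associated normed quotients via \Cref{seminorm_norm}.

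For the ``only if'' direction, I would assume that $f$ represents an isomorphism, so there is a morphism $g : W \to V$ with $\|g\| \le 1$ such that $gf \sim \id_V$ and $fg \sim \id_W$; concretely this means $\|g(f(x)) - x\| = 0$ and $\|f(g(y)) - y\| = 0$ for all $x \in V$ and $y \in W$. Condition~(ii) is then immediate: given $y$, the vector $x \coloneqq g(y)$ satisfies $\|y - f(x)\| = 0$. For condition~(i), I would combine $\|f(x)\| \le \|x\|$ (from $\|f\| \le 1$) with $\|x\| = \|g(f(x))\| \le \|f(x)\|$, where the first equality holds because $\|g(f(x)) - x\| = 0$ forces $\|g(f(x))\| = \|x\|$ by the triangle inequality, and the inequality holds because $\|g\| \le 1$. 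This gives equality throughout, so $\|f(x)\| = \|x\|$.

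For the ``if'' direction, assume~(i) and~(ii). First I would observe that~(i) forces $f$ to carry the null space $V_0$ into $W_0$, so $f$ descends to a linear map $\bar f : V/V_0 \to W/W_0$ between the associated normed spaces. Condition~(i) makes $\bar f$ isometric, and condition~(ii) makes it surjective, so $\bar f$ is a bijective isometry whose set-theoretic inverse is again an isometry, hence a morphism of operator norm $\le 1$; thus $\bar f$ is an isomorphism in $\Norm$. Since the quotient maps $q_V : V \to V/V_0$ and $q_W : W \to W/W_0$ are isomorphisms in $\Norm$ by \Cref{seminorm_norm}, and since $q_W f = \bar f\, q_V$ holds on representatives, I would conclude that $f = q_W^{-1}\, \bar f\, q_V$ is a composite of isomorphisms in $\Norm$, hence itself an isomorphism.

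The only genuinely delicate point is the correct bookkeeping around the equivalence relation $\sim$: one must note that the operator norm is constant on equivalence classes, and that $\|a - b\| = 0$ implies $\|a\| = \|b\|$. The role of \Cref{seminorm_norm} is precisely to quotient out $V_0$ and $W_0$ so that ``essential surjectivity'' becomes honest surjectivity and the constructed inverse automatically has operator norm $\le 1$. Beyond this, I expect no further obstacle.
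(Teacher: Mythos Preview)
Your argument is correct and follows essentially the same route as the paper: the forward direction unpacks the inverse $g$ directly, and the backward direction reduces to the normed quotients $V/V_0$ and $W/W_0$ via \Cref{seminorm_norm}, exactly as the paper does in its one-line sketch. You have simply made explicit the details that the paper leaves to the reader.
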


\begin{proof}
	If an inverse exists, as represented by a map $g : W \to V$ with $\|g\| \le 1$, then it is straightforward to see that these conditions hold.
	Conversely, if these conditions hold, then $f$ clearly induces an isomorphism between $V / V_0$ and $W / W_0$, so that the claim follows by the previous lemma.
\end{proof}

We now turn to the definition of null morphisms that lies behind our approach to $\eps$-homological algebra, and prove that these null morphisms form an ideal to which our general results apply.
The construction of kernels and cokernels given in the proof below is central to how we think of $\eps$-homological algebra.

\begin{prop}
	\label{Normideal}
	In $\Norm$, the morphisms of norm $\leq \eps$ form an ideal $\ideal_\eps$ with respect to which kernels and cokernels exist.
\end{prop}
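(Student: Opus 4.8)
The plan is to first dispatch the ideal property, which is essentially automatic, and then to construct both kernels and cokernels by keeping the underlying vector space fixed while modifying its seminorm, taking the structure morphism to be the identity linear map in each case. For the ideal property, note that the operator norm is well-defined on equivalence classes (if $f \sim f'$ then $|\,\|f(x)\| - \|f'(x)\|\,| \le \|f(x) - f'(x)\| = 0$, so $\|f\| = \|f'\|$) and is submultiplicative. Since every morphism of $\Norm$ has norm $\le 1$, the estimates $\|gf\| \le \|g\|\,\|f\| \le \eps$ and $\|fh\| \le \|f\|\,\|h\| \le \eps$ whenever $\|f\| \le \eps$ show that $\ideal_\eps$ is closed under pre- and post-composition, hence a two-sided ideal.

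For the kernel of $f : V \to W$, I would take $\iKer{f}$ to be the vector space $V$ equipped with the \emph{enlarged} seminorm
\[
	\|v\|_K \;=\; \max\!\left(\|v\|,\ \eps^{-1}\|f(v)\|\right),
\]
with $\iker{f}$ the identity map. It has norm $\le 1$ since $\|v\|_K \ge \|v\|$, and $f\,\iker{f}$ is null because $\|f(v)\| \le \eps\|v\|_K$ by construction. For the universal property, any $g : X \to V$ with $\|fg\| \le \eps$ obeys $\|g(x)\|_K = \max(\|g(x)\|, \eps^{-1}\|f(g(x))\|) \le \|x\|$, so the same underlying linear map, viewed as a morphism $X \to \iKer{f}$, is the required factorization; uniqueness holds because $\iker{f}$ is monic (a $\|\cdot\|$-null vector is automatically $\|\cdot\|_K$-null, using that $\|f\| \le 1$ sends null vectors to null vectors).

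The cokernel is the delicate part, and it is precisely here that seminorms become unavoidable. Dually, I would take $\iCoker{f}$ to be $W$ equipped with the \emph{reduced} seminorm given by the Minkowski gauge of $\conv{B_W \cup \eps^{-1} f(B_V)}$ (with $B_V, B_W$ the unit balls), which works out to the infimal convolution
\[
	\|w\|_Q \;=\; \inf_{v \in V}\!\left(\|w - f(v)\| + \eps\|v\|\right),
\]
and $\icoker{f}$ the identity map. This is by design the largest seminorm dominated by $\|\cdot\|$ for which $\|f(v)\|_Q \le \eps\|v\|$; crucially it may be degenerate even when $\|\cdot\|$ is a genuine norm, which is exactly why one must work in $\Norm$ rather than with honest normed spaces. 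The universal property reduces to the estimate
\[
	\|g(w)\| \;=\; \|g(w - f(v)) + g(f(v))\| \;\le\; \|w - f(v)\| + \eps\|v\|,
\]
valid for every $g : W \to Y$ with $\|g\| \le 1$ and $\|gf\| \le \eps$ and every $v \in V$; taking the infimum over $v$ yields $\|g(w)\| \le \|w\|_Q$, exhibiting $g$ as factoring uniquely through the (epic) morphism $\icoker{f}$.

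The main obstacle I anticipate is identifying the cokernel seminorm and verifying its universal property: the infimal-convolution formula is not obvious a priori, and recognizing it as the gauge of the convex hull $\conv{B_W \cup \eps^{-1} f(B_V)}$ is the conceptual heart of the construction. A secondary technical point, recurring in both cases, is that morphisms are equivalence classes, so the monomorphism, epimorphism, and uniqueness claims must be checked modulo the degeneracy of the seminorms; this works out cleanly because every morphism has norm $\le 1$ and therefore maps null vectors to null vectors.
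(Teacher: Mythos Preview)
Your proposal is correct and follows essentially the same approach as the paper: the ideal property via submultiplicativity, the kernel as $V$ with the enlarged seminorm $\max(\|v\|,\eps^{-1}\|f(v)\|)$, and the cokernel as $W$ with the infimal convolution $\inf_{v}(\|w-f(v)\|+\eps\|v\|)$, both with the identity as structure map. You are in fact slightly more thorough than the paper in checking that the operator norm is well-defined on equivalence classes and in addressing monicity/epicity modulo null vectors.
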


From now on, we will speak of \newterm{$\eps$-kernels} and \newterm{$\eps$-cokernels} in order to emphasize the dependence on the parameter $\eps$.\footnote{Recall our assumption $\eps \in (0,1)$. Although $\eps = 1$ could be considered, the theory is not of interest in this case since this choice makes every morphism null. And setting $\eps = 0$ in order to recover ordinary homological algebra with vector spaces makes sense up to the definition of $\ideal_0$, but not beyond that since $\eps^{-1}$ appears already in the proof of \Cref{Normideal}.}

\begin{proof}
	The basic inequality $\|gf\| \leq \|g\| \, \|f\|$ for any two composable morphisms $f$ and $g$ shows that if $f$ or $g$ has norm $\leq \eps$, then so does $gf$, given that they both have norm $\leq 1$ in any case. Hence $\ideal_\eps$ is indeed an ideal.

	Now let $f : V\to W$ be a morphism in $\Norm$. We claim that its kernel is given by the vector space $V$ itself equipped with the modified \newterm{$\eps$-kernel seminorm},
	\begin{equation}
		\label{epskernel}
		\|x\|_{\iker{f}_\eps} \coloneqq \max \left( \|x\|, \eps^{-1} \|f(x)\| \right).
	\end{equation}
	Since $\|x\| \leq \|x\|_{\iker{f}_\eps}$ holds by definition, we indeed have a well-defined inclusion morphism
	\[
		\iker{f}_\eps : (V,\|\cdot\|_{\iker{f}_\eps}) \longrightarrow (V,\|\cdot\|).
	\]
	The inequality $\|f(x)\| \le \eps \|x\|_{\iker{f}_\eps}$ shows that its composition with $f$ is in $\ideal_\eps$.
	To check the universal property, suppose that we have $g : U \to V$ such that $\|fg\| \leq \eps$. Since $\iker{f}_\eps$ is a monomorphism by construction, it is enough to show that $g$ factors through $\iker{f}_\eps$, or equivalently that
	\[
		\|g(u)\|_{\iker{f}_\eps} \leq \|u\|
	\]
	for all $u\in U$. By the definition of the left-hand side as a maximum, this is equivalent to the conjunction of $\|g(u)\| \leq \|u\|$ and $\eps^{-1}\|f(g(u))\| \leq \|u\|$. While the former holds thanks to $\|g\| \leq 1$, the latter is a restatement of the assumed $\|fg\| \leq \eps$.

	We now get to the construction of $\eps$-cokernels, for the same $f : V \to W$. We take the $\eps$-cokernel to be given by $W$ itself, now equipped with the modified seminorm\footnote{Here we can see why it is convenient to work with seminorms rather than norms. If we did the latter, then we would now have to take the quotient by the null space explicitly, which would complicate the exposition.}
	\begin{equation}
		\label{epscokernel}
		\|y\|_{\icoker{f}_\eps} \coloneqq \inf_{x\in V} \left( \|y - f(x) \| + \eps \|x\| \right).
	\end{equation}
	for all $y\in W$. Similarly to the $\eps$-kernel case, since $\|y\|_{\icoker{f}_\eps} \leq \|y\|$ holds by construction, the identity map on $W$ induces a well-defined morphism
	\[
		\icoker{f}_\eps : (W,\|\cdot\|)\to (W,\|\cdot\|_{\icoker{f}_\eps}).
	\]
	Because of
	\[
		\|f(x)\|_{\icoker{f}_\eps} \leq \eps \|x\|
	\]
	for all $x\in V$, the composition of $f$ with $\icoker{f}_\eps$ indeed has norm at most $\eps$.\footnote{However, it is not difficult to find examples where the above inequality is strict, e.g.~taking $f = 0$.} To check the universal property, let $h : W \to X$ be another morphism, and suppose that $\|hf\| \leq \eps$.
	Then what we need to show for the unique factorization is that
	\[
		\|h(y)\| \leq \|y\|_{\icoker{f}_\eps}
	\]
	for all $y\in W$. To this end, it is enough to prove that
	\[
		\|h(y)\| \leq \|y - f(x)\| + \eps \|x\|
	\]
	for all $x\in V$ and $y\in W$. But this follows from
	\[
		\|h(y)\| \leq \|h(y) - h(f(x))\| + \|h(f(x))\| \leq \|y - f(x)\| + \eps \|x\|,
	\]
	using $\|h\|\leq 1$ and $\|hf\| \leq \eps$.
\end{proof}

\begin{rem}
	Geometrically, these constructions of kernel and cokernel have the following meaning. The unit ball of the $\eps$-kernel norm~\eqref{epskernel} is given by
	\[
		V_1 \cap \eps f^{-1}(W_1).
	\]
	The unit ball of the $\eps$-cokernel norm~\eqref{epscokernel} is given by the smallest unit ball which contains both $W_1$ and $\eps^{-1} f(V_1)$, which is
	\[
		\conv{\eps^{-1} f(V_1) \cup W_1}
	\]
	together with all additional $y\in W$ for which $ty$ lies in this convex hull for every $t \in [0,1)$.
	It may help the reader to keep these geometrical descriptions in mind, identifying every seminorm with the gauge of its unit ball~\cite[II.1.4]{schaefer}.
\end{rem}

We note a few additional consequences of the construction of $\eps$-kernels and $\eps$-cokernels given in the proof.

\begin{rem}
	\begin{enumerate}
		\item For $f : V \to W$, its image $\iIm{f}$ is given by $W$ itself equipped with the \newterm{$\eps$-image seminorm}
			\beq
				\label{eps_im}
				\|y\|_{\iim{f}_\eps} \coloneqq \max \left(\|y\|,\inf_{x \in V} (\eps^{-1} \|y - f(x)\| + \|x\| ) \right).
			\eeq
			As one might expect, this diverges as $\eps \to 0$ for every $y$ that is not in the closure of the set-theoretic image of $f$.
		\item The coimage of $f$ is given by $V$ itself with the \newterm{$\eps$-coimage seminorm}
			\beq
				\label{eps_coim}
				\|x\|_{\icoim{f}_\eps} \coloneqq \inf_{x' \in V} \bigg( \|x - x'\| + \max( \eps \|x'\|, \|f(x')\| ) \bigg) = \max(\eps \|x\|, \|f(x)\|).
			\eeq
			We prove the second equation by arguing that the infimum is achieved at $x' = x$. Indeed for general $x'$, we get by $\eps \le 1$,
			\[
				\|x - x'\| + \eps \|x'\| \ge \eps(\|x\| - \|x'\|) + \eps \|x'\| = \eps \|x\|,
			\]
			which proves the claim if $\|f(x)\| \le \eps \|x\|$. Similarly,
			\[
				\|x - x'\| + \|f(x')\| \ge \|f(x)\| - \|f(x')\| + \|f(x')\| = \|f(x)\|,
			\]
			which applies in case that $\|f(x)\| \ge \eps \|x\|$.

			In particular, for $\eps \to 0$, the coimage seminorm degenerates to $\|x\|_{\icoim{f}_0} = \|f(x)\|$, which is again what one might expect.
	\end{enumerate}
\end{rem}

The following result seems rather odd.

\begin{prop}
	\label{normal_chars}
	For a general morphism $f : V \to W$ in $\Norm$, the following are equivalent:
	\begin{enumerate}
		\item\label{eps_mono} $f$ is a normal monomorphism.
		\item\label{eps_epi} $f$ is a normal epimorphism.
		\item\label{eps_balanced} $f$ is essentially surjective and
			\beq
				\label{lower_bound}
				\|x\| \le \eps^{-1} \|f(x)\| \quad \forall x \in V.
			\eeq
	\end{enumerate}
\end{prop}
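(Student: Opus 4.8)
The plan is to reduce statements \ref{eps_mono} and \ref{eps_epi} to isomorphism statements via the normal factorization \eqref{imcoim_factor}, and then to decode those isomorphisms using the concrete criterion of \Cref{norm_iso} together with the explicit image and coimage seminorms \eqref{eps_im} and \eqref{eps_coim}. Recall that every normal monomorphism is the kernel of its cokernel, i.e.\ coincides with its own image; hence $f$ is a normal monomorphism if and only if the map $V \to \iIm{f}$ appearing in \eqref{imcoim_factor} is an isomorphism. Since $\iIm{f}$ has underlying vector space $W$ equipped with the $\eps$-image seminorm and $\iim{f}$ is the identity on $W$, this map is simply $f$ viewed as a morphism $(V,\|\cdot\|)\to(W,\|\cdot\|_{\iim{f}_\eps})$. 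Dually, using that every normal epimorphism is the cokernel of its kernel (hence equals its own coimage), $f$ is a normal epimorphism if and only if the map $\iCoim{f}\to W$ is an isomorphism, which is $f$ viewed as a morphism $(V,\|\cdot\|_{\icoim{f}_\eps})\to(W,\|\cdot\|)$.

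The epimorphism side is the easier one, and I would dispatch \ref{eps_epi}$\Leftrightarrow$\ref{eps_balanced} first. Using $\|x\|_{\icoim{f}_\eps} = \max(\eps\|x\|,\|f(x)\|)$ from \eqref{eps_coim} and applying \Cref{norm_iso} to $f : (V,\|\cdot\|_{\icoim{f}_\eps})\to(W,\|\cdot\|)$, the norm-preservation condition $\|f(x)\| = \max(\eps\|x\|,\|f(x)\|)$ is literally equivalent to the lower bound \eqref{lower_bound}, while essential surjectivity is verbatim that of \ref{eps_balanced}. For \ref{eps_balanced}$\Rightarrow$\ref{eps_mono}, I would feed the lower bound into the infimum defining the $\eps$-image seminorm: for any $x$, the estimate $\eps^{-1}\|f(x)-f(x')\| \ge \|x - x'\|$ forces $\inf_{x'}(\eps^{-1}\|f(x)-f(x')\| + \|x'\|) = \|x\|$ (attained at $x'=x$), so that $\|f(x)\|_{\iim{f}_\eps} = \max(\|f(x)\|,\|x\|) = \|x\|$, verifying norm preservation for $f:(V,\|\cdot\|)\to(W,\|\cdot\|_{\iim{f}_\eps})$; essential surjectivity transfers because $\|z\|_{\iim{f}_\eps} = 0$ is equivalent to $\|z\| = 0$ (the reverse implication following by taking $x'=0$ in the infimum).

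The remaining and most delicate implication is \ref{eps_mono}$\Rightarrow$\ref{eps_balanced}, where I must extract the lower bound from the mere fact that $f:(V,\|\cdot\|)\to(W,\|\cdot\|_{\iim{f}_\eps})$ is an isomorphism, i.e.\ from $\|f(x)\|_{\iim{f}_\eps} = \|x\|$ for all $x$. The essential surjectivity part of \ref{eps_balanced} again follows since $\|\cdot\|$ and $\|\cdot\|_{\iim{f}_\eps}$ have the same null vectors. For the lower bound, I expect the crux to be a rescaling argument inside the infimum: supposing $\|f(x_0)\| < \eps\|x_0\|$ for some $x_0$ normalized to $\|x_0\| = 1$, substituting $x' = \lambda x_0$ into the infimum of \eqref{eps_im} yields $(1-\lambda)\eps^{-1}\|f(x_0)\| + \lambda$, which is strictly below $1$ for every $\lambda \in [0,1)$ because $\eps^{-1}\|f(x_0)\| < 1$; together with $\|f(x_0)\| < \eps < 1$ this gives $\|f(x_0)\|_{\iim{f}_\eps} < 1 = \|x_0\|$, contradicting norm preservation. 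This substitution of a small multiple of $x_0$ into the infimum is the one genuinely non-routine step; everything else is bookkeeping with \Cref{norm_iso} and the seminorm formulas.
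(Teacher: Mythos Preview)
Your proof is correct and largely parallels the paper's: both hinge on \Cref{norm_iso} together with the explicit image and coimage seminorms, and your treatments of \ref{eps_balanced}$\Rightarrow$\ref{eps_mono} and \ref{eps_epi}$\Leftrightarrow$\ref{eps_balanced} match the paper's almost verbatim (indeed your single-shot handling of \ref{eps_epi}$\Leftrightarrow$\ref{eps_balanced} via the coimage formula is slightly tidier than the paper's separate forward and backward arguments).

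The one substantive difference is the direction \ref{eps_mono}$\Rightarrow$\ref{eps_balanced}. The paper does not extract \eqref{lower_bound} from the image seminorm at all; instead it observes that a normal monomorphism is, up to isomorphism, literally some $\iker{g}$, and then reads off \ref{eps_balanced} in one line from the kernel formula \eqref{epskernel} via $\|x\|_{\iker{g}_\eps} = \max(\|x\|,\eps^{-1}\|g(x)\|) \le \eps^{-1}\|x\|$. Your contradiction route through the image seminorm also works but is heavier than necessary: already the single choice $x' = 0$ (your $\lambda = 0$) gives $\|f(x_0)\|_{\iim{f}_\eps} \le \max\bigl(\|f(x_0)\|,\,\eps^{-1}\|f(x_0)\|\bigr) = \eps^{-1}\|f(x_0)\| < 1$, so the full $\lambda$-rescaling is superfluous. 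The trade-off is that your organisation is more uniform (everything via image/coimage), while the paper's use of the kernel formula makes this particular implication a triviality.
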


\begin{proof}
	We show first that \ref{eps_mono} is equivalent to~\ref{eps_balanced}.
	In the forward direction, it is enough to show that \ref{eps_balanced} holds for all morphisms of the form $f = \iker{g}$, where $g$ is arbitrary.
	Essential surjectivity is clear by the construction of $\eps$-kernels given above.
	Moreover, the formula~\eqref{epskernel} gives
	\[
		\|x\|_{\iker{g}_\eps} = \max(\|x\|, \eps^{-1} \|g(x)\|) \le \eps^{-1} \|x\|,
	\]
	which is true by $\eps \le 1$ and $\|g\| \le 1$ and corresponds to~\eqref{lower_bound}.

	Conversely, if $f$ satisfies~\ref{eps_balanced}, then we prove that $f$ is the kernel of its cokernel by showing that the induced morphism $V \to \iIm{f}$ is an isomorphism.
	By \Cref{norm_iso}, the above description of the image and the essential surjectivity assumption, this amounts to showing that for all $x \in V$,
	\[
		\max \left(\|f(x)\|,\inf_{x' \in V} (\eps^{-1} \|f(x) - f(x')\| + \|x'\| ) \right) = \|x\|.
	\]
	This follows by arguing that the infimum is attained at $x' = x$.
	Indeed,
	\[
		\eps^{-1} \|f(x) - f(x')\| + \|x'\| \ge \|x - x'\| + \|x'\| \ge \|x\|,
	\]
	as was to be shown.

	We now turn to the equivalence of~\ref{eps_epi} and~\ref{eps_balanced}.
	In the forward direction, it is enough to show that \ref{eps_balanced} holds for all morphisms of the form $f = \icoker{h}$, where $h$ is arbitrary.
	Indeed the formula~\eqref{epscokernel} gives
	\begin{align*}
		\|y\| & = \inf_{x \in \dom{h}} \left( \|y - h(x)\| + \|h(x)\| \right) \\
			& \le \inf_{x \in \dom{h}} \left( \eps^{-1} \|y - h(x) \| + \|x\| \right) \\
			& = \eps^{-1} \|y\|_{\icoker{f}_\eps}
	\end{align*}
	for any $y$ in the codomain of $h$, as was to be shown for~\eqref{lower_bound}.

	Conversely, if $f$ satisfies~\ref{eps_balanced}, then we prove that $f$ is the cokernel of its cokernel by showing that the induced morphism $\iCoim{f} \to W$ is an isomorphism.
	By \Cref{norm_iso}, the above description of the coimage and the essential surjectivity assumption, this amounts to showing that for all $x \in V$,
	\[
		\max \left( \eps \|x\|, \|f(x)\| \right) = \|f(x)\|.
	\]
	This holds because it is obviously equivalent to the assumed~\eqref{lower_bound}.
\end{proof}

We briefly consider the lattice of normal subobjects.

\begin{cor}
	\label{nsb_norm}
	For any seminormed space $V$, its lattice of normal subobjects with respect to $\ideal_\eps$ has a natural identification with
	\[
		\Nsb{V} \cong \{ \textrm{seminorms } |\cdot| \textrm{ on } V \mid \|x\| \le |x| \le \eps^{-1} \|x\| \}.
	\]
\end{cor}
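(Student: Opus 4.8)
The plan is to use the previously established machinery on $\eps$-kernels, $\eps$-cokernels, and the characterization of normal monomorphisms (\Cref{normal_chars}) to translate each normal subobject of $V$ into a seminorm on the underlying space. The key structural fact is that a normal monomorphism into $V$ is, by \Cref{normal_chars}\ref{eps_mono} and \ref{eps_balanced}, an essentially surjective map $s : (V', |\cdot|) \to (V, \|\cdot\|)$ satisfying the lower bound $|x'| \le \eps^{-1}\|s(x')\|$. I would first argue that, up to the isomorphism relation in $\Norm$, such a representative can always be taken to have underlying vector space $V$ itself, with $s$ the identity map. This follows from essential surjectivity together with \Cref{norm_iso}: essential surjectivity lets us transport the seminorm along $s$, and the norm-preservation direction of an isomorphism is exactly what lets us replace $V'$ by $V$ without changing the normal subobject.

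**Constructing the correspondence.** Once every normal subobject is represented as $\id_V : (V, |\cdot|) \to (V, \|\cdot\|)$ for some seminorm $|\cdot|$, the conditions defining a normal monomorphism become conditions purely on $|\cdot|$. The morphism condition $\|\id_V\| \le 1$ forces $\|x\| \le |x|$ for all $x$, and the lower bound \eqref{lower_bound} from \Cref{normal_chars}\ref{eps_balanced} becomes $|x| \le \eps^{-1}\|x\|$. Thus the assignment sends the normal subobject to the seminorm $|\cdot|$ satisfying $\|x\| \le |x| \le \eps^{-1}\|x\|$, which is precisely the set on the right-hand side of the claimed identification. I would then check that this assignment is a bijection: distinct such seminorms give non-isomorphic normal monomorphisms (by the norm-preservation criterion of \Cref{norm_iso}, two identity maps equipped with different seminorms cannot be isomorphic in the slice over $V$), and every such seminorm arises, since given $|\cdot|$ with $\|x\|\le|x|\le\eps^{-1}\|x\|$ one verifies directly via \Cref{normal_chars}\ref{eps_balanced} that $\id_V : (V,|\cdot|)\to(V,\|\cdot\|)$ is a normal monomorphism.

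**Order-preservation and the main obstacle.** To upgrade the bijection of sets to an identification of posets, I would recall that $s \le t$ in $\Nsb{V}$ means $s$ factors through $t$; in the seminorm picture this factorization is again an identity map on $V$, whose norm-$\le 1$ condition translates into the pointwise inequality $|\cdot|_t \le |\cdot|_s$ of the corresponding seminorms (larger subobject, smaller seminorm, matching the intuition that a finer seminorm detects more). The main obstacle I anticipate is the care needed in handling the equivalence relation on morphisms and the passage between the abstract domain $V'$ and the concrete model on $V$: one must verify that transporting the seminorm along an essentially surjective map is well-defined on equivalence classes and that two representatives yielding the same seminorm on $V$ really do represent the same element of $\Nsb{V}$. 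None of these steps is deep, but each relies on correctly invoking \Cref{norm_iso} and the explicit $\eps$-image formula \eqref{eps_im}, so the bookkeeping around essential surjectivity is where the argument must be written carefully.
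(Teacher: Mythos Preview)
Your proposal is correct and follows essentially the same approach as the paper: use the essential surjectivity of normal monomorphisms together with \Cref{norm_iso} to reduce to representatives of the form $\id_V : (V,|\cdot|) \to (V,\|\cdot\|)$, and then read off the seminorm inequalities from \Cref{normal_chars}. The paper's proof is a two-line compression of exactly this argument, while you spell out the bijectivity and order-preservation that the paper leaves implicit (the latter appearing only as a remark after the proof).
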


Of course, the partial order on $\Nsb{V}$ corresponds to the reverse order between these seminorms, or equivalently (and perhaps more intuitively) to forward inclusion of unit balls.
By the correspondence between normal subobjects and normal quotients of~\eqref{nqt_nsb}, the lattice of normal quotients $\Nqt{V}$ has the same form.

\begin{proof}
	By \Cref{norm_iso} and the essential surjectivity of normal monomorphisms, we can assume without loss of generality that the underlying vector space of every normal subobject is $V$ itself, with inclusion map being the identity.
	But then the claim follows by \Cref{normal_chars}.
\end{proof}

\begin{rem}
	The lattice structure on $\Nsb{V,\|\cdot\|}$ is given by
	\begin{align*}
		%\label{norm_lattice}
		(|\cdot|_1 \land |\cdot|_2)(x) & = \max( |x|_1, |x|_2 ), \\
		(|\cdot|_1 \lor |\cdot|_2)(x) & = \inf_{x_1, x_2 \in V \: : \: x_1 + x_2 = x} (|x_1|_1 + |x_2|_2).
	\end{align*}
	which is the restriction of the lattice structure on the set of \emph{all} seminorms on the vector space $V$.
	Unfortunately, $\Nsb{V}$ is typically\footnote{Although we only present one single counterexample to modularity in what follows, its structure indicates that this counterexample is rather generic. More precisely, we conjecture that $\Nsb{V,\|\cdot\|}$ is modular if and only if $\dim(V / V_0) \le 1$.} not modular.
	For example on $\R^2$, consider the following scalar multiples of the standard $p$-norms,
	\[
		\frac{1}{4} \|\cdot\|_1, \quad \frac{1}{3} \|\cdot\|_\infty, \quad \frac{1}{5} \|\cdot\|_1.		
	\]
	Per \Cref{nsb_norm}, these are in the normal subobject lattice on $\R^2$ with respect to any sufficiently small reference norm and any sufficiently small $\eps$.
	Then, considering the polyhedral unit balls and their vertices as in \Cref{unit_balls_R2}, some calculation shows that
	\begin{align*}
		\left( \frac{1}{4} \|\cdot\|_1 \lor \left( \frac{1}{3} \|\cdot\|_\infty \land \frac{1}{5} \|\cdot\|_1 \right) \right) \left( \frac{7}{2}, \frac{3}{2} \right) & = \frac{17}{16}, \\
		\left( \left( \frac{1}{4} \|\cdot\|_1 \lor \frac{1}{3} \|\cdot\|_\infty \right) \land \frac{1}{5} \|\cdot\|_1 \right) \left( \frac{7}{2}, \frac{3}{2} \right) & = 1.
	\end{align*}
	Modularity fails since these results are not equal.
\end{rem}

\begin{figure}
	\centering
	\begin{tikzpicture}[scale=0.65]
		\draw[->,very thick] (-6,0)--(6,0) node[right]{};
		\draw[->,very thick] (0,-6)--(0,6) node[above]{};
		\draw[red] (5.0,-3.0) -- (2.33,5.0);
		\draw[fill=gray,fill opacity=0.2] (3,3) -- (3,-3) -- (-3,-3) -- (-3,3) -- cycle;
		\draw[fill=gray,fill opacity=0.2] (4,0) -- (0,4) -- (-4,0) -- (0,-4) -- cycle;
		\draw[fill=gray,fill opacity=0.2] (5,0) -- (0,5) -- (-5,0) -- (0,-5) -- cycle;
		\draw[ForestGreen] (3,2) -- (4,0);
		\draw[ForestGreen] (0,0) -- (3.5,1.5);
		\node[circle,fill,draw,blue,inner sep=1pt,label={[blue,above right]$(\frac{7}{2},\frac{3}{2})$}] at (3.5,1.5) {};
	\end{tikzpicture}
	\caption{}
	\label{unit_balls_R2}
\end{figure}

Our final goal is to sketch some basics of $\eps$-curved homological algebra in our framework. 
Our ideal of null morphisms $\ideal_\eps$ has been chosen precisely such that chain complexes in $(\Norm,\ideal_\eps)$ amount to the following.

\begin{defn}
	\label{epscurvedcomplex}
	An \newterm{$\eps$-curved chain complex} $(C_\bullet,d_\bullet)$ is a sequence of seminormed spaces $(C_n)_{n\in\Z}$ together with morphisms $d_n : C_n \to C_{n-1}$ such that
	\[
		\|d_n \| \leq 1, \qquad \| d_{n+1} d_n \| \leq \eps \qquad \forall n \in \Z.
	\]
\end{defn}

The condition $\|d_n\| \leq 1$ is technically redundant since every morphism has norm $\le 1$ by definition, but we nevertheless restate it for emphasis. The main point of this definition is that the differential does not need to square to zero, since chains of the form $d_{n-1} d_n c$ are merely required to be small in norm, $\|d_{n-1} d_n c\| \le \eps \|c\|$.

\begin{prop}
	\label{eps_exact_char}
	An $\eps$-curved chain complex $C$ is exact if and only if for every $n \in \Z$ and $y \in C_n$,
	\beq
		\label{eps_exact}
		\inf_{x \in C_{n+1}} \left( \| y - d_{n+1} x \| + \eps \|x\| \right) \le \max \left( \eps \|y\|, \|d_n y\| \right).
	\eeq
\end{prop}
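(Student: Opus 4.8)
The plan is to translate the definition of exactness (\Cref{chaincomplex}) into the explicit seminorm language provided by \Cref{nsb_norm}. Exactness of $C$ at $C_n$ means $\iim{d_{n+1}} = \iker{d_n}$ as normal subobjects of $C_n$. By \Cref{nsb_norm}, each of these is represented by $C_n$ itself carrying a seminorm between $\|\cdot\|$ and $\eps^{-1}\|\cdot\|$, and two such represent the same normal subobject precisely when the seminorms agree (\Cref{norm_iso}). Reading off \eqref{eps_im} for $f = d_{n+1}$ and \eqref{epskernel} for $f = d_n$, exactness at $C_n$ is therefore equivalent to the pointwise identity
\[
	\max\Bigl(\|y\|,\ \inf_{x \in C_{n+1}}\bigl(\eps^{-1}\|y - d_{n+1}x\| + \|x\|\bigr)\Bigr) = \max\bigl(\|y\|,\ \eps^{-1}\|d_n y\|\bigr)
\]
for all $y \in C_n$.

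Next I would observe that the $\ge$ direction of this identity always holds, so that exactness reduces to the reverse inequality. In fact I expect the stronger pointwise bound that the inner infimum already dominates $\eps^{-1}\|d_n y\|$: for every $x$, using $\|d_n\| \le 1$ together with $\|d_n d_{n+1}\| \le \eps$ (the two defining bounds of \Cref{epscurvedcomplex}), one has $\|y - d_{n+1}x\| \ge \|d_n(y - d_{n+1}x)\| \ge \|d_n y\| - \|d_n d_{n+1} x\| \ge \|d_n y\| - \eps\|x\|$, whence $\eps^{-1}\|y - d_{n+1}x\| + \|x\| \ge \eps^{-1}\|d_n y\|$; taking the infimum over $x$ gives the claim. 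This is the concrete incarnation of the always-valid inclusion $\iim{d_{n+1}} \le \iker{d_n}$ coming from $d_n d_{n+1}$ being null.

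Finally, granted the $\ge$ direction, the identity holds iff the infimum is bounded above by $\max(\|y\|, \eps^{-1}\|d_n y\|)$, since the common summand $\|y\|$ inside both maxima may be discarded; rescaling this inequality by $\eps$ produces exactly \eqref{eps_exact}. The main things to be careful about are the order convention of \Cref{nsb_norm} (a smaller normal subobject corresponds to a larger seminorm), so that the automatic inclusion of images in kernels becomes the $\ge$ rather than the $\le$ direction, and the bookkeeping in simplifying the two maxima. The only genuine analytic content is the short estimate in the middle paragraph, which I anticipate to be the sole real step.
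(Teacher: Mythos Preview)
Your argument is correct and matches the paper's proof essentially step for step: both reduce exactness to the displayed seminorm identity via \eqref{epskernel} and \eqref{eps_im}, observe that the $\ge$ direction is automatic from $\iim{d_{n+1}} \le \iker{d_n}$, and then simplify the remaining $\le$ direction to \eqref{eps_exact}. You even supply the explicit estimate for the $\ge$ direction that the paper relegates to a footnote as an ``instructive exercise''.
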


\begin{proof}
	In light of $\iim{d_{n+1}} = \iker{d_n}$ being the definition of exactness, the formulas for kernel and image given as \eqref{epskernel} and~\eqref{eps_im} show that exactness holds if and only if
	\[
		\max \left( \|y\|, \inf_{x \in C_{n+1}} \left( \eps^{-1} \| y - d_{n+1} x \| + \|x\| \right) \right) = \max \left( \|y\|, \eps^{-1} \|d_n y\| \right).
	\]
	for all $y$.
	But since the inequality direction $\ge$ is clear by the assumed $\iim{d_{n+1}} \le \iker{d_n}$,\footnote{It is an instructive exercise to derive the inequality $\ge$ directly from $\|d_n d_{n+1}\| \le \eps$.} this is manifestly equivalent to the inequality in the statement.
\end{proof}

The following observation suggests that our version of $\eps$-curved homological algebra has the potential to provide a general framework for Kazhdan's ad-hoc arguments.

\begin{rem}
	\label{kazhdan}
	Our $\eps$-exactness condition~\eqref{eps_exact} is closely related to Kazhdan's notion of \emph{$\eps$-acyclicity}, which is defined in~\cite[p.~316]{kazhdan} as the existence of $x \in C_{n+1}$ for every $y \in C_n$ such that
	\[
		\|x\| \le \|y\|, \qquad \|y - d_{n+1} x \| \le \eps \|y\| + \| d_n y \|.
	\]
	Indeed up to a constant factor\footnote{Note also that the differentials used by Kazhdan only satisfy $\|d_n\| \le n+1$ rather than $\|d_n\| \le 1$. Rescaling them to our convention will introduce another scalar factor beyond what we discuss here.}, we can write our condition~\eqref{eps_exact} as equivalent to the existence of $x$ such that
	\[
		\|x\| \le \|y\| + \eps^{-1} \|d_n y\|, \qquad \|y - d_{n+1} x \| \le \eps \|y\| + \|d_n y\|,
	\]
	which is more permissive than Kazhdan's definition due to the extra term in the first inequality, but otherwise coincides with it.
	Going through Kazhdan's use of $\eps$-acyclicity~\cite[p.~319]{kazhdan} shows that his argument still works even with this more permissive notion of $\eps$-exactness (with a slightly worse bound on the constants). 
\end{rem}

To conclude, we believe to have demonstrated that our framework, based on Grandis's approach to non-abelian homological algebra, is a good candidate for a theory of $\eps$-curved homological algebra.
We expect that the application of the results of \Cref{sec_arrows} to $\Norm$ with $\ideal_\eps$ as its ideal of null morphisms can take this further, to the point where a well-behaved $\eps$-version of group cohomology and some tools for its calculation can be developed, involving homology objects of the form~\eqref{arrow_homology_gen} and~\eqref{arrow_homology}.
We leave further investigations of this idea to future work.

\bibliographystyle{plain}
\bibliography{approximate_cohomology}

\end{document}